\title{Affine Invariant Covariance Estimation \\ for Heavy-Tailed Distributions}
\author{
Dmitrii M. Ostrovskii~\thanks{Viterbi School of Engineering, University of Southern California, Los Angeles, USA.}
\thanks{This work has been done while the first author was at the SIERRA Project-Team, of Inria, Paris, France.}
\\ \texttt{dostrovs@usc.edu}
\and 
Alessandro Rudi~\thanks{SIERRA Project-Team, Inria and \' Ecole Normale Sup\' erieure, PSL Research University, Paris, France.}\\ \texttt{alessandro.rudi@inria.fr}
}
\newcommand{\Nystrom}[1]{{Nystr\"om}}
\providecommand{\scal}[2]{\left\langle{#1},{#2}\right\rangle}
\providecommand{\tr}{\operatorname{Tr}}
\newcommand{\R}{\mathbb R}
\newcommand{\la}{\lambda}
\newcommand{\eps}{\epsilon}
\renewcommand{\S}{{S}}
\renewcommand{\k}{{K}}
\def\k{\kappa}
\def\Id{\mathbf{I}}
\def\cH{\mathcal{H}}
\newcommand{\A}{\mathbf{A}}
\newcommand{\bB}{\mathbf{B}}
\newcommand{\tO}{\wt{O}}
\newcommand{\WM}{\textup{WM}}
\newcommand{\lmin}{\lambda_{\min}}
\renewcommand{\S}{{\mathbf S}}
\newcommand{\J}{{\mathbf J}}
\newcommand{\lam}{\lambda}
\newcommand{\wt}{\widetilde}
\newcommand{\lang}{\langle}
\newcommand{\rang}{\rangle}
\newcommand{\E}{\mathds{E}}
\newcommand{\wh}{\widehat}
\newcommand{\df}{\mathsf{df}}
\newcommand{\prccq}{\preccurlyeq}
\newcommand{\succq}{\succcurlyeq}
\newcommand{\deff}{d_{\text{eff}}}
\newcommand{\reff}{\mathtt{r}}
\newcommand{\veps}{\varepsilon}
\newcommand{\vphi}{\varphi}
\newcommand{\bT}{\mathbf{T}}
\newcommand{\bJ}{\mathbf{J}}
\renewcommand{\le}{\leqs}
\renewcommand{\ge}{\geqs}
\newcommand{\leqs}{\leqslant}
\newcommand{\geqs}{\geqslant}
\newcommand{\bLambda}{\boldsymbol{\Lambda}}
\newcommand{\cJ}{\mathcal{J}}
\newcommand{\Z}{\mathds{Z}}
\newcommand{\thetamin}{\theta_{\min}}
\newcommand{\thetamax}{\theta_{\max}}
\newcommand{\whj}{\wh \jmath}
\newcommand{\proofpoint}[1]{$\boldsymbol{{#1}^o}.$}
\newcommand{\kresp}{\varkappa}
\renewcommand{\le}{\leqs}
\renewcommand{\ge}{\geqs}
\renewcommand{\leq}{\leqs}
\renewcommand{\geq}{\geqs}
\renewcommand{\preceq}{\prccq}
\renewcommand{\epsilon}{\veps}
\newcommand{\cond}{\textrm{cond}}
\newcommand{\bR}{\mathbf{R}}
\newcommand{\q}{\mathtt{q}}
\newtheorem{example}{Example}[section]
\newtheorem{theorem}{Theorem}[section]
\newtheorem{lemma}{Lemma}[section]
\newtheorem{corollary}{Corollary}[section]
\newtheorem{proposition}{Proposition}[section]
\newtheorem{remark}{Remark}[section]
\newtheorem{assumption}{Assumption}[section]
\newtheorem{definition}{Definition}[section]
\newcommand{\eqal}[1]{\begin{align}#1\end{align}}
\newcommand{\bpr}{\begin{proof}}
\newcommand{\epr}{\end{proof}}
\newcommand{\be}{\begin{equation}}
\newcommand{\ee}{\end{equation}}
\newcommand{\bd}{\begin{definition}}
\newcommand{\ed}{\end{definition}}
\newcommand{\bi}{\begin{itemize}}
\newcommand{\ei}{\end{itemize}}
\newcommand{\ba}{\begin{assumption}}
\newcommand{\ea}{\end{assumption}}
\newcommand{\bre}{\begin{restatable}}
\newcommand{\ere}{\end{restatable}}
\newcommand{\br}{\begin{remark}}
\newcommand{\er}{\end{remark}}
\newcommand{\bp}{\begin{proposition}}
\newcommand{\ep}{\end{proposition}}
\newcommand{\blm}{\begin{lemma}}
\newcommand{\elm}{\end{lemma}}
\newcommand{\bt}{\begin{theorem}}
\newcommand{\et}{\end{theorem}}
\newcommand{\bcor}{\begin{corollary}}
\newcommand{\ecor}{\end{corollary}}
\newcommand{\bex}{\begin{example}}
\newcommand{\eex}{\end{example}}
\crefname{assumption}{Assumption}{Assumptions}
\crefname{equation}{Eq.}{Eqs.}
\crefname{figure}{Fig.}{Figs.}
\crefname{table}{Table}{Tables}
\crefname{section}{Sec.}{Secs.}
\crefname{theorem}{Thm.}{Thms.}
\crefname{lemma}{Lemma}{Lemmas}
\crefname{corollary}{Cor.}{Cors.}
\crefname{example}{Example}{Examples}
\crefname{appendix}{Appendix}{Appendixes}
\crefname{remark}{Remark}{Remark}
\newcommand{\Shat}{\widehat{\S}}
\newcommand{\Stilde}{\widetilde{\S}}
\def\rank{\operatorname{rank}}
\begin{document}

\maketitle

\begin{abstract}
In this work we provide an estimator for the covariance matrix of a heavy-tailed multivariate distribution.
We prove that the proposed estimator $\widehat{\mathbf{S}}$ admits an \textit{affine-invariant} bound of the form 
\[
(1-\varepsilon) \mathbf{S} \preccurlyeq \widehat{\mathbf{S}} \preccurlyeq (1+\varepsilon) \mathbf{S}
\]
in high probability, where $\mathbf{S}$ is the unknown covariance matrix, and $\preccurlyeq$ is the positive semidefinite order on symmetric matrices. The result only requires the existence of fourth-order moments, and allows for $\varepsilon = O(\sqrt{\kappa^4 d\log(d/\delta)/n})$ where $\kappa^4$ is a measure of kurtosis of the distribution, $d$ is the dimensionality of the space, $n$ is the sample size, and $1-\delta$ is the desired confidence level. More generally, we can allow for regularization with level $\lambda$, then $d$ gets replaced with the degrees of freedom number. Denoting $\text{cond}(\mathbf{S})$ the condition number of $\mathbf{S}$, the computational cost of the novel estimator is $O(d^2 n + d^3\log(\text{cond}(\mathbf{S})))$, which is comparable to the cost of the sample covariance estimator in the statistically interesing regime $n \ge d$. We consider applications of our estimator to eigenvalue estimation with relative error, and to ridge regression with heavy-tailed random design.
\end{abstract}

\section{Introduction}
We are interested in estimating the covariance matrix~$\S = \E[X \otimes X]$~of a zero-mean random vector~$X \in \R^d$ from~$n$ independent and identically distributed~(i.i.d.)~copies~$X_1,..., X_n$ of~$X$. 
This task is crucial -- and often arises as a subroutine -- in some widely used statistical procedures, such as linear regression, principal component analysis, factor analysis, generalized methods of moments, and mean-variance portfolio selection, to name a few~\citep{friedman2001elements,jolliffe2002principal,hansen1982large,markowitz1952portfolio}.
In some of them, the control of~$\|\Shat - \S\|_*$, where~$\Shat$ is a covariance estimator and~$\|\cdot\|_*$ is the spectral, Frobenius or trace norm, does not result in sharp theoretical guarantees.
Instead, it might be necessary to estimate the eigenvalues of~$\S$ in relative scale, ensuring that
\[
|\la_j(\Shat) - \la_j(\S)| \leqs \veps \la_j(\S), \quad j \in \{1,...,d\},
\]
holds for $\veps > 0$ (this task arises in the analysis of the subspace iteration method, see~\cite{halko2011finding} and~\cref{sec:applications}).
More generally, one may seek to provide affine-invariant bounds of the form
\eqal{\label{eq:affine-invariant-intro}
(1-\epsilon) \S \preceq \Shat \preceq (1+\epsilon) \S,
}
as in the analysis of linear regression with random design~\citep[see][and \cref{sec:back-affine} for more details]{hsu2012random}, where $\preceq$ is the positive semidefinite partial order for symmetric matrices. 
In fact, the basic and very natural {\em sample covariance estimator} 
\[
\Stilde = \frac{1}{n} \sum_{i=1}^n X_i \otimes X_i
\]
can be shown to satisfy \cref{eq:affine-invariant-intro}
with probability at least $1-\delta$,~$\delta \in (0,1]$, and accuracy~$\veps$ scaling as~$O((\rank(\S)\log(d/\delta)/n)^{1/2})$, provided that~$X$ is subgaussian (see \cref{sec:back-affine} for a detailed discussion).
However, the assumption of sub-gaussianity might be too strong in the above applications.
Going beyond it and similar assumptions is particularly important in mathematical finance, where it is widely accepted that the prices of assets might have heavy-tailed distributions~\citep{kelly2014tail,bradley2003financial}. 

We propose a simple variation of the sample covariance estimator (see Algorithm~\ref{alg:calibrated-basic}) in the form
\[
\Shat = \frac{1}{n}\sum_{i=1}^n \alpha_i X_i \otimes X_i,
\]
where the coefficients $\alpha_1, \dots, \alpha_n > 0$ are chosen in a data-driven manner.
Our main result, stated informally below, shows that the proposed estimator enjoys \textit{high-probability} bounds analogous to those for the sample covariance estimator, under a weak moment assumption on the distribution.
%
Namely, we assume that for some~$\kappa \ge 1$ it holds
\eqal{
\label{ass:kurtosis}
\E^{1/4}[\scal{X}{u}^4] \leq \kappa \E^{1/2}[\scal{X}{u}^2], \quad \forall u \in \R^d.
\tag{HT}}
In other words, the {\em kurtosis} of~$X$ is bounded by~$\kappa$ in all directions.\footnotemark
\footnotetext{Note that we use a slightly non-standard definition of kurtosis, extracting the corresponding roots from the moments.}
Kurtosis is an affine-invariant and unitless quantity, and it is uniformly bounded from above by a constant for many common families of multivariate distributions: for example,~$\kappa = \sqrt[4]{3}$ for any Gaussian distribution, and~$\kappa \le \sqrt[4]{9}$ for the multivariate Student-t distribution with at least $5$ degrees of freedom.

Now we are ready to informally state our main result.
\bt[Simplified version of~\cref{th:calibrated-basic}]
Under~\eqref{ass:kurtosis}, there exists an estimator~$\Shat$ that has computational cost~$O(d^2n + d^3)$, and with probability at least $1-\delta$ satisfies~\eqref{eq:affine-invariant-intro} with accuracy
\eqal{\label{eq:affine-invariance}
\epsilon \leq 48\kappa^2\sqrt{\frac{\rank(\S)\log(4d/\delta)}{n}}.
}
\et
This result shows that the proposed estimator is a valid alternative to the sample covariance estimator: it has comparable accuracy and the same computational complexity, while requiring only boundedness of the fourth moment of~$X$ instead of sub-gaussianity.
More generally, by allowing a {\em regularization level}~$\la > 0$, i.e., using~$\Shat_{\lam} := \Shat + \la \Id$ instead of~$\Shat$ to estimate~$\S_{\la} := \S + \la \Id$ instead of~$\S$ (as required in ridge regression~\citep{hsu2012random}, we can replace~$\rank(\S)$ with the {\em degrees of freedom} number
\eqal{\label{def:degrees-of-freedom}
\df_\la(\S) := \tr (\S \S_{\lam}^{-1}).
}
This leads to a better bound, since~$\df_\la(\S)$ is never larger than~$\min\{\rank(\S), \tr(\S)/\la\}$, and can be way smaller depending on the eigenvalue decay of~$\S$: for example, if~$\la_j(\S) \leq j^{-b}$ with~$b \geq 1$, then~$\df_\la(\S) \leq \la^{-1/b}$.

\paragraph{Paper Organization.}
In~\cref{sec:background} we recall the known results for the sample covariance matrix estimator under light-tailed assumptions, together with some recent high-probability results for an alternative estimator applicable to heavy-tailed distributions. 
The novel estimator is presented in \cref{sec:algorithm} and analyzed and discussed in detail in \cref{sec:main-result}. 
In order to achieve the best statistical performance, it requires the knowledge of the distribution parameters~$\kappa$  and~$\df_{\lam}(\S)$ in advance;
in \cref{sec:lepski} we extend the algorithm, via a variant of Lepskii's method~\citep{lepski1991}, to be adaptive to these quantities.
Applications to eigenvalue estimation and ridge regression are discussed in~\cref{sec:applications}.

\paragraph{Notation and Conventions.}
For~$X \in \R^d$,~$X \otimes X$ denotes the outer product~$X X^\top$. W.l.o.g.~we assume that~$\S = \E[X \otimes X]$ is full-rank (otherwise we can work on its range).
To reduce the clutter of parentheses, we convene that powers and multiplication have priority over the expectation, and we denote the~$1/p$-th power of expectation,~$p \ge 1$, with~$\E^{1/p}[\cdot]$.
We use~$\|\cdot\|$ for the spectral norm of a matrix (unless specified otherwise), as well as for the~$\ell_2$-norm of a vector. We shortand~$\min(a,b)$ to~$a \wedge b$.
We use the~$O(\cdot)$ notation in a conventional way, and occasionally replace generic constants with~$O(1)$.
We use the notation~$\A_{\lam} := \A + \lam \Id$, where~$\A \in \R^{d \times d}$ and~$\Id$ is the identity matrix.
\section{Background and Related Work}\label{sec:background}
In this section we recall some relevant previous work on covariance estimators for light-tailed and heavy-tailed distributions, and provide more intuition about affine-invariant error bounds. 
Moreover, we introduce basic concepts that will be used later on in the theoretical analysis.

\subsection{Relative Error Bounds for the Sample Covariance Estimator}
\label{sec:back-sample-cov}
When the estimation error is measured by~$\|\Stilde - \S\|$, where $\|\cdot\|$ is the \textit{spectral norm}, the problem can be reduced, via the Chernoff bounding technique, to the control of the matrix moment generating function, for which one can apply some deep operator-theoretic results such as the Goldon-Thompson inequality~\citep{ahlswede2002strong,oliveira2010sums} or Lieb's theorem~\citep{tropp2012user}. Alternatively, one may reduce the task to the control of an underlying empirical process, and exploit advanced tools from empirical process theory such as generic chaining~\citep{koltchinskii2017concentration}. 
Both families of approaches have been focused on the sample covariance estimator and its direct extensions~\citep{vershynin2010introduction,tropp2012user,tropp2015introduction}, requiring stronger assumptions on the distribution of~$X$ than~\eqref{ass:kurtosis}. 
In particular, consider the \textit{subgaussian moment growth} assumption
\begin{equation}
\label{ass:subgaussian}
\E^{1/p} [|\lang X, u \rang|^{p}] \le  \bar\kappa \sqrt{p} \, \E^{1/p} [\lang X, u \rang^{2}], \quad \forall u \in \R^d \;\; \text{and} \;\; p \ge 2,
\tag{SG}
\end{equation}
which implies~\eqref{ass:kurtosis} with~$\kappa = 2 \bar \kappa$, where~$\kappa$ is defined in \cref{ass:kurtosis}. 
Define the \textit{effective rank} of~$\S$ by
\begin{equation}
\label{def:eff-rank}
\reff(\S) := {\tr(\S)}/{\|\S\|}.
\end{equation}%
The following result is known.
\begin{theorem}[{Simplified version of~\cite[Prop.~3]{lounici2014high-dimensional}}]
\label{th:lounici}
Under~\eqref{ass:subgaussian}, the sample covariance estimator~$\wt \S$ with probability at least~$1-\delta$,~$\delta \in (0,1]$, satisfies
\begin{equation}
\label{eq:error-sample-covariance}
\| \wt \S - \S\| \le O(1) \bar\kappa^2 \|\S\| \sqrt{\frac{\reff(\S)\log(2d/\delta)}{n}},
\end{equation}
provided that~$n \ge \wt O(1) \reff(\S)$, where~$\wt O(1)$ hides polynomial dependency on~$\log(2d/\delta)$ and~$\log(2n/\delta)$.
\end{theorem}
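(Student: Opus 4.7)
The plan is to bound the operator-norm deviation by writing
\[
\wt\S - \S \;=\; \frac{1}{n}\sum_{i=1}^n W_i, \qquad W_i \;:=\; X_i\otimes X_i - \S,
\]
as a sum of i.i.d.\ centered symmetric matrices, and applying a matrix Bernstein-type inequality. Two ingredients are required: the matrix variance $\sigma^2 := \|\E[W_i^2]\|$ and a tail control on $\|W_i\|$.

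For the variance, I would expand $\E[W_i^2] = \E[\|X\|^2\,(X\otimes X)] - \S^2$ and bound the first term in the PSD order. Using \eqref{ass:subgaussian} with $p=4$ applied direction by direction, combined with Cauchy--Schwarz to decouple $\|X\|^2 = \sum_j \lang X, e_j\rang^2$ from $\lang X, u\rang^2$, one obtains $\E[\|X\|^2 (X\otimes X)] \preceq C\bar\kappa^4\,\tr(\S)\,\S$, whence $\sigma^2 \le C\bar\kappa^4 \tr(\S)\,\|\S\| = C\bar\kappa^4 \reff(\S)\,\|\S\|^2$. The crucial feature is that this bound is linear in $\tr(\S)$ rather than in the ambient $d$, which is what eventually produces the $\sqrt{\reff(\S)}$ factor inside the square root of \eqref{eq:error-sample-covariance}.

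For the tail, the summands $W_i$ are unbounded, so the standard bounded matrix Bernstein does not apply verbatim. Under \eqref{ass:subgaussian}, the scalar $\|X\|^2$ is subexponential with Orlicz $\psi_1$-norm $\lsim \bar\kappa^2 \tr(\S)$, hence so is $\|W_i\|$. I would then invoke a subexponential matrix Bernstein inequality (e.g.\ by truncating $X_i$ at scale $\bar\kappa\sqrt{\tr(\S)\log(n/\delta)}$, controlling the bias of the truncated mean by a union bound over the truncation event, then applying the bounded version on the truncated variables, in the spirit of \cite{koltchinskii2017concentration,tropp2015introduction}). This yields
\[
\|\wt\S - \S\| \;\lsim\; \sigma\sqrt{\tfrac{\log(d/\delta)}{n}} \,+\, \bar\kappa^2 \tr(\S)\,\tfrac{\log(n/\delta)\log(d/\delta)}{n}.
\]
Plugging in the variance estimate turns the first term into $\lsim \bar\kappa^2 \|\S\|\sqrt{\reff(\S)\log(d/\delta)/n}$, matching \eqref{eq:error-sample-covariance}, while the second term is dominated by the first precisely when $n \ge \wt O(1)\,\reff(\S)$, which is where the sample-size hypothesis is used.

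The main obstacle is the variance step: one must extract a genuine \emph{matrix} inequality $\E[\|X\|^2 (X\otimes X)] \preceq C\bar\kappa^4 \tr(\S)\,\S$ and not merely a scalar bound on the spectral norm of its expectation, since Bernstein requires spectral-norm control on $\E[W_i^2]$ as a symmetric matrix. It is precisely here that the full strength of \eqref{ass:subgaussian} (as opposed to just \eqref{ass:kurtosis}) is used. An alternative route avoids matrix Bernstein by viewing $\|\wt\S - \S\|$ as the supremum of a quadratic empirical process over the unit sphere and applying generic chaining à la Talagrand, but the Bernstein route is cleaner for the simplified statement here.
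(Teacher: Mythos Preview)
The paper does not prove this theorem; it is quoted from \cite{lounici2014high-dimensional} as background, with no proof given. Your sketch via truncation plus matrix Bernstein is the standard route and is essentially how the cited result is obtained, so the overall strategy is sound.

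One point deserves correction, however: you locate the need for the subgaussian assumption \eqref{ass:subgaussian} in the variance step, but this is backwards. The variance bound $\|\E[\|X\|^2\, X\otimes X]\| \le C\kappa^4\, \tr(\S)\,\|\S\|$ already holds under the weaker fourth-moment assumption \eqref{ass:kurtosis}; this is precisely \cref{eq:variance-bound} in the paper, which cites \cite{wei2017estimation}. The place where \eqref{ass:subgaussian} is genuinely indispensable is the \emph{tail} step: to conclude that $\|X\|^2$ is subexponential with $\psi_1$-norm $\lsim \bar\kappa^2\tr(\S)$ you need control of \emph{all} moments of the one-dimensional marginals, not just the fourth. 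Under \eqref{ass:kurtosis} alone the truncation argument breaks down, which is exactly why the paper turns to the Wei--Minsker estimator in \cref{sec:background-robust}. Also, your remark that Bernstein requires a full PSD ordering $\E[\|X\|^2\, X\otimes X] \preceq C\,\tr(\S)\,\S$ rather than ``merely a scalar bound on the spectral norm'' is not accurate: matrix Bernstein only needs the scalar quantity $\|\E[W_i^2]\|$, and the spectral-norm bound on $\E[\|X\|^2\, X\otimes X]$ suffices directly.
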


It can be shown~\citep[Prop.~6.10]{ledoux2013probability} that~\cref{eq:error-sample-covariance} nearly optimally depends on~$\reff(\S)$,~$\bar \kappa$, and~$1/\delta$.\footnotemark~
\footnotetext{In fact,~\citep[Theorem 9]{koltchinskii2017concentration} replaces~$\reff(\S)\log(2d/\delta)$ by~$\reff(\S) + \log(1/\delta)$, making the bound dimension-independent, but does not specify the dependency on~$\bar \kappa$.
Similar results have been obtained under~\eqref{ass:kurtosis} for robust estimators (see~\cref{sec:background-robust}), which, however, are computationally intractable~\citep{mendelson2018robust}.}
Another remarkable property of this bound is that it is almost dimension-independent: 
up to a logarithmic factor, the complexity of estimating~$\S$ is independent of the ambient dimension~$d$. 
Instead, it is controlled by the distribution-dependent quantity~$\reff(\S)$, which always satisfies~$\reff(\S) \le \rank(\S) \le d$, and can be much smaller than~$\rank(\S)$ when the distribution of~$X$ lies close to a low-dimensional linear subspace, i.e., when~$\S$ has only a few relatively large eigenvalues. 

\subsection{Relative Error Bounds for Heavy-Tailed Distributions}
\label{sec:background-robust}
It is possible to obtain relative error bounds of the form $\|\Shat - \S\|$, including the ones in high probability, under \textit{weak} moment assumptions such as~\eqref{ass:kurtosis}, considering other estimators than the sample covariance matrix. 
In particular, ~\cite{wei2017estimation} propose an estimator based on the idea of clipping observations with large norm. 
Formally, they define the truncation map~$\psi_{\theta}: \R \to \R$,
\begin{equation}
\label{eq:truncation-map}
\psi_{\theta}(x) := (|x| \wedge \theta) \, \text{sign}(x),
\end{equation}
given a certain threshold~$\theta > 0$, and consider the estimator 
\begin{equation}
\label{def:minsker}
\wh\S^{\WM} := \frac{1}{n} \sum_{i=1}^n \rho_{\theta}(\|X_i\|) \, X_i \otimes X_i, \;\; \text{where} \;\; \rho_{\theta}(x) :=  {\psi_{\theta}(x^2)}/{x^2}.
\end{equation}
In other words, one simply truncates observations with squared norm larger than~$\theta$ prior to averaging. 
This estimator is a key ingredient in our Algorithm~\ref{alg:calibrated-basic}, and we now summarize its statistical properties. 
\begin{theorem}[{\citep[Lem.~2.1 and Lem. 5.7]{minsker2017estimation}}]
\label{th:minsker}
Define the matrix second moment statistic~$W := \|\E[\|X\|^2 X \otimes X]\|$.
Let~$\overline W \ge W$, and~$\delta \in (0,1]$. Then estimator~$\wh\S^{\WM}$, cf.~\eqref{def:minsker},
with~
$\theta = \sqrt{{n \overline W}/{\log(2d/\delta)}}$
with probability at least~$1-\delta$ satisfies~
$
\| \wh\S^{\WM} - \S \| \le 2\sqrt{{\overline W\log(2d/\delta)}/{n}}.
$
\end{theorem}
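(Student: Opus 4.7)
The plan is to decompose the error into a bias term coming from truncation and a stochastic fluctuation term, bound the latter via a matrix Bernstein-type inequality, and tune $\theta$ to balance both contributions.

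First I would set $Y_i := \rho_\theta(\|X_i\|)\, X_i \otimes X_i$ so that $\wh\S^{\WM} = \tfrac{1}{n}\sum_i Y_i$, and write
\[
\wh\S^{\WM} - \S \;=\; \underbrace{\Big(\tfrac{1}{n}\textstyle\sum_i Y_i - \E Y\Big)}_{\text{stochastic}} \;+\; \underbrace{\big(\E Y - \S\big)}_{\text{bias}}.
\]
The key structural observation is that $\rho_\theta(\|X\|)\|X\|^2 = \psi_\theta(\|X\|^2) \leq \theta$ pointwise, which immediately gives the almost-sure bound $\|Y_i\| = \rho_\theta(\|X_i\|)\|X_i\|^2 \leq \theta$. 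For the bias, since $1 - \rho_\theta(\|X\|)$ vanishes on $\{\|X\|^2 \leq \theta\}$ and is at most $1$ otherwise, I would use the deterministic inequality $\ind\{\|X\|^2 > \theta\} \leq \|X\|^2/\theta$ to conclude
\[
\|\E Y - \S\| \;=\; \big\|\E\big[(1-\rho_\theta(\|X\|))\, X\otimes X\big]\big\| \;\leq\; \tfrac{1}{\theta}\big\|\E[\|X\|^2\, X\otimes X]\big\| \;=\; W/\theta \;\leq\; \overline W/\theta.
\]

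Next, for the stochastic part, I would bound the matrix variance by using $\rho_\theta(\|X\|)^2 \|X\|^2 \leq \rho_\theta(\|X\|)\|X\|^2 \cdot 1 \leq \|X\|^2$, so
\[
\big\|\E Y^2\big\| \;=\; \big\|\E[\rho_\theta(\|X\|)^2 \|X\|^2\, X\otimes X]\big\| \;\leq\; \big\|\E[\|X\|^2 X\otimes X]\big\| \;=\; W \;\leq\; \overline W,
\]
and therefore $\big\|\sum_i \E(Y_i - \E Y)^2\big\| \leq n\overline W$. Applying a matrix Bernstein inequality to the centered bounded summands $Y_i - \E Y$ (with a.s.\ bound $2\theta$ and variance proxy $n\overline W$), I would obtain, with probability at least $1-\delta$,
\[
\Big\|\tfrac{1}{n}\textstyle\sum_i (Y_i - \E Y)\Big\| \;\lesssim\; \sqrt{\tfrac{\overline W \log(2d/\delta)}{n}} \;+\; \tfrac{\theta \log(2d/\delta)}{n}.
\]

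Finally, plugging in $\theta = \sqrt{n\overline W/\log(2d/\delta)}$ makes both $\overline W/\theta$ (bias) and $\theta\log(2d/\delta)/n$ (Bernstein linear term) equal to $\sqrt{\overline W\log(2d/\delta)/n}$, matching the Bernstein variance term, and a triangle inequality yields the stated bound. The main obstacle is squeezing the leading constant down to exactly $2$: the generic matrix Bernstein bound produces a suboptimal constant, so I would either use the sharper Lieb-based matrix moment inequality that Minsker employs (which gives a clean one-sided exponential tail with leading constant matching the sub-Gaussian regime) or collapse the two contributions via the fact that $\E Y \preceq \S$, which allows a one-sided Chernoff argument of the form $\Prob[\lambda_{\max}(\wh\S^{\WM} - \S) > t] \leq d\inf_\beta e^{-\beta t} \lambda_{\max}(\E e^{\beta(Y - \S)})$ and then bound the matrix MGF directly using boundedness and the variance proxy. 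Either route yields the stated $2\sqrt{\overline W\log(2d/\delta)/n}$ after symmetrizing with $-\wh\S^{\WM} + \S$.
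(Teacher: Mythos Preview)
The paper does not prove this theorem; it is quoted from \cite{minsker2017estimation} (their Lemmas~2.1 and~5.7) and used as a black box, so there is no in-paper proof to compare against. Your bias--variance decomposition is exactly the standard route and is correct: the bias bound $\|\E Y-\S\|\le W/\theta$ via $(1-\rho_\theta(\|X\|))\le \ind\{\|X\|^2>\theta\}\le \|X\|^2/\theta$, the a.s.\ bound $\|Y_i\|\le\theta$, and the variance bound $\|\E Y^2\|\le W$ are all verified as you state, and plugging in the given $\theta$ balances the three contributions.

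You are also right that the only non-routine point is the leading constant. A plain matrix Bernstein application yields a constant strictly larger than $2$; the constant $2$ in the cited statement comes from Minsker's direct matrix MGF argument (Lieb/Tropp-type master bound applied to the truncated summands without first centering), which avoids the extra factor from the triangle inequality in the bias--fluctuation split. Your closing paragraph identifies this correctly, so the proposal is sound as a sketch of the cited result.
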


In contrast with~\cref{th:lounici},~\cref{th:minsker} claims \textit{subgaussian} concentration for the spectral-norm loss under the weak moment assumption~\eqref{ass:kurtosis}. 
Moreover, we arrive at the relative error bound akin to~\eqref{eq:error-sample-covariance}:
\begin{equation}
\label{eq:error-minsker}
\left\| \wh \S^{\WM} - \S \right\| \le 2 \kappa^2 \|\S\| \sqrt{\frac{\reff(\S) \log(2d/\delta)}{n}},
\end{equation}
if we bound the second moment statistic as
\begin{equation}
\label{eq:variance-bound}
W \le \left\| \E \|X\|^2 X \otimes X \right\| \le \k^4 \|\S\|^2 \reff(\S),
\end{equation}
see~\cite[Lem.~2.3 and Cor.~5.1]{wei2017estimation}, and choose the appropriate truncation level
\[
\theta = \kappa^2 \|\S\| \sqrt{{n \reff{(\S)}}/{\log(2d/\delta)}} \ge \sqrt{{nW}/{\log(2d/\delta)}}.
\]
Since this choice depends on the unknown~$W$, one can use a larger value, which will result in the inflation of the right-hand side of \cref{eq:error-minsker}. 
An alternative is to adapt to the unknown~$W$ via Lepskii's method~\citep{lepski1991} as described in~\cite[Thm~2.1]{wei2017estimation}. 
To conclude, the estimator~$\wh \S^{\WM}$ enjoys subgaussian relative error bounds under the fourth moment assumption~\eqref{ass:kurtosis}, while having essentially the same computation cost as the sample covariance estimator. 

\subsection{Affine-Invariant Bounds for the Sample Covariance Estimator}
\label{sec:back-affine}
As we have seen previously, estimator~$\wh\S^{\WM}$ has favorable statistical properties compared to~$\wt \S$ when the goal is to estimate~$\S$ in relative spectral-norm error as in~\cref{eq:error-minsker}. 
However, one can instead be interested in providing affine-invariant bounds in the form of \cref{eq:affine-invariant-intro}. 
More generally, one may wish to estimate~$\S$ only for the eigenvalues greater than some level~$\lam > 0$, that is, to guarantee that
\begin{equation}
\label{eq:intro-psd-regularized}
(1 - \veps)\S_{\lam} \prccq \wh \S_{\lam} \prccq (1+\veps)\S_{\lam}.
\end{equation}
The need for such bounds arises, in particular, in random-design ridge regression, where the information about inferior eigenvalues is irrelevant, since it is anyway erased by regularization. Note that~\cref{eq:intro-psd-regularized}, for any~$\lam > 0$, can be reformulated in terms of the~$\S_\lam^{-1/2}$-transformed spectral norm:
\begin{equation}
\label{eq:intro-relative-scale}
\left\|\S_\lam^{-1/2} (\wh\S - \S ) \S_\lam^{-1/2}\right\| \le \veps.
\end{equation}
The task of obtaining such bounds, with arbitrary regularization level~$0 \le \lam \le \|\S\|$, will be referred to as \textbf{calibrated covariance estimation}. Generally, this task is harder than proving relative-error bounds in the spectral norm such as~\cref{eq:error-minsker}: the latter is equivalent, up to a constant factor loss of accuracy, to proving~\cref{eq:intro-relative-scale} with~$\lam = \|\S\|$. 
On the other hand, calibrated covariance estimation also subsumes~\cref{eq:affine-invariant-intro} by taking~$\lam = O(\la_{\min}(\S))$, where~$\lmin(\S)$ is the smallest eigenvalue of~$\S$.

Now, one can make a simple observation that for the sample covariance estimator~$\wt\S$, calibrated bounds of the form~\eqref{eq:intro-relative-scale} ``automatically'' follow from the dimension-free spectral-norm bounds akin to~\eqref{eq:error-sample-covariance} or~\eqref{eq:error-minsker} due to its affine equivariance.
Indeed,~$\J := \S_\lam^{-1/2} \S \S_\lam^{-1/2}$
is precisely the covariance matrix of the ``$\lam$-decorrelated'' observations~$Z_i = \S_{\lam}^{-1/2}X_i$, for which the sample covariance estimator is given by~$\wt \J := \frac{1}{n} \sum_{i=1}^n Z_i \otimes Z_i = \S_{\lam}^{-1/2} \wt \S \S_{\lam}^{-1/2}$. Hence, we can apply the spectral-norm bound~\eqref{eq:error-sample-covariance}, replacing~$\S$ and~$\wt\S$ with~$\J$ and~$\wt \J$. Using the fact that~$\|\J\| \le 1$ for any~$\lam \ge 0$, and that assumptions~\eqref{ass:kurtosis},~\eqref{ass:subgaussian} are themselves invariant under (non-singular) linear transforms, we obtain
\begin{equation}
\label{eq:error-sample-covariance-relative}
\E^{1/2} \Big[\left\|\S_\lam^{-1/2} ( \wt\S - \S ) \S_\lam^{-1/2}\right\|^2\Big] \le O(1) \bar \k^2 \sqrt{\frac{\df_{\lam}(\S) \log(2d)}{n}}
\end{equation}
once~$n \ge \tO(1) \df_{\lam}(\S),$ where $\df_{\lambda}(\cdot)$ is defined in \cref{def:degrees-of-freedom} and ranges from~$O(\reff(\S))$ to~$\rank(\S) \le d$ as~$\lam$ decreases from~$\|\S\|$ to~zero. 
In fact, when~$\bar \k$ is a constant, this rate is known to be asymtptocially minimax-optimal over certain natural classes of covariance matrices, e.g., Toeplitz matrices with spectra discretizing those of H\"older-smooth functions~\citep{bickel2008regularized,cai2010optimal}. 
It is thus reasonable to ask whether one can extend~\cref{eq:error-sample-covariance-relative} in the same manner as~\cref{eq:error-minsker} extends~\cref{eq:error-sample-covariance}. 
In other words, \emph{can one provide a high-probability guarantee for the calibrated error~(cf.~\cref{eq:intro-relative-scale}) of the estimator~$\wh\S^{\WM}$ (cf.~\cref{def:minsker}) under assumption~\eqref{ass:kurtosis}?}
The immediate difficulty is that~$\wh\S^{\WM}$ -- in fact, the only estimator for which finite-sample high-probability guarantees under fourth moment assumptions are known to us -- does not allow for the same reasoning as~$\wt\S$ because of the non-linearity introduced by the truncation map.
On the other hand, the desired bounds are achieved by the ``oracle'' estimator that truncates the ``$\lam$-decorrelated'' vectors~$Z_i = \S_{\lam}^{-1/2}X_i$ with accordingly adjusted~$\theta$:
\begin{equation}
\label{def:oracle}
\wh\S^{o} := 
\frac{1}{n} \sum_{i=1}^n \rho_{\theta} (\|\S_{\lam}^{-1/2}X_i\|) X_i \otimes X_i 
= \S_{\lam}^{1/2} \left[\frac{1}{n} \sum_{i=1}^n \rho_{\theta}(\|Z_i\|) Z_i \otimes Z_i \right]\S_{\lam}^{1/2}. 
\end{equation}
cf.~\cref{def:minsker}. Unfortunately, this estimator is unavailable since~$Z_i$'s are not observable. 
In what follows, we present our main methodological contribution: an estimator that achieves the stated goal, and moreover, has a similar complexity of computation and storage as the sample covariance matrix.

\begin{remark}
Some robust covariance estimators, such as MCD or MVE~\citep{campbell1980robust,lopuhaa1991breakdown,rousseeuw1999fast}, are affine equivariant,
but to the best of our knowledge, the desired bounds are not known for them. On the other hand,~\cite{oliveira2016lower} shows that if one only seeks for the left-hand side bound in~\eqref{eq:intro-psd-regularized}, the sample covariance estimator suffices under~\eqref{ass:kurtosis}.
\end{remark}
\section{Proposed Estimator}
\label{sec:algorithm}
Our goal can be summarized as follows: given~$\delta \in (0,1],$ and~$\lam \ge 0$, provide an estimate~$\wh \S$ satisfying
\[
\left\|\S_\lam^{-1/2} ( \wh\S - \S ) \S_\lam^{-1/2}\right\| \le O(1)\k^2\sqrt{\frac{\df_{\lam}(\S) \log(2d/\delta)}{n}},
\]
where~$\kappa$ is the kurtosis parameter of~$X$ (cf.~\eqref{ass:kurtosis}). Moreover, we can restrict ourselves to the case~$\lam \le \|\S\|$, since otherwise the task is resolved by the estimator~$\wh \S^{\WM}$ as can be seen from~\cref{eq:error-minsker}.

As we have seen before, the oracle estimator~$\Shat^o$ introduced in the previous section (cf.~\cref{def:oracle}) achieves the stated goal, but is unavailable since it depends explicitly on~$\S$. 
The key idea of our construction is to approximate~$\Shat^o$ in an iterative fashion -- roughly, to start with~$\wh \S^{(0)} = \wh\S^{\WM}$, which is already a good estimate for~$\S_{\lam}$ with the crudest regularization level~$\lam = \|\S\|$ due to~\cref{eq:error-minsker}, and then iteratively refine the estimate by computing
\vspace{-0.1cm}
\begin{equation}
\label{eq:proto-update}
\wh \S^{(t+1)} = \frac{1}{n} \sum_{i=1}^n  \rho_{\theta} (\|[\Shat^{(t)}_{\lam}]^{-1/2}X_i\|) \, X_i \otimes X_i. 
\vspace{-0.2cm}
\end{equation}
To make this simple idea work, we need to adjust it in two ways.
Firstly,~$\wh\S^{(t)}$ depends on the observations~$X_i$, hence the random vectors~$[\wh\S^{(t)}_{\la}]^{-1/2} X_i$ are not independent.
To simplify the analysis, we split the sample into batches corresponding to different iterations, and at each iteration use observations of the new batch instead of~$X_i$'s in~\cref{eq:proto-update}. 
Since~$\wh \S^{(t)}$ is independent from the new observations, we can apply~\cref{th:minsker} conditionally at each step.

Secondly, as discussed before, the estimator~$\wh \S^{\WM}$ given by~\eqref{def:minsker} already solves the problem for~$\lam = \|\S\|$. To achieve~\cref{eq:intro-relative-scale} for a given $\la$, the idea is to start with~$\la_0 = \|\S\|$, and reduce~$\lam_t$ by a constant factor at each iteration, so that the error~$\|\S_{\lam}^{-1/2} ( \wh\S^{(t)} - \S ) \S_{\lam}^{-1/2}\|$ remains controlled for~$\lam = \lam_t$ at each step. 
This way we also ensure that the total number of iterations is logarithmic in~$\|\S\|/\lam$, and, in particular, logarithmic in the condition number~$\|\S\|/\lmin(\S)$ when~$\lam \ge \lmin(\S)$.

Algorithm~\ref{alg:calibrated-basic} presented below implements these ideas. Note that the final batch of observations takes a half of the overall sample: this is needed to achieve the best possible accuracy (up to a constant factor) for the final regularization level, while at the previous levels it suffices to maintain the accuracy~$\veps = 1/2$, and one can use smaller batches taking up a half of the sample in total, see Lem.~\ref{lem:stability} in~\cref{sec:main-result} for details. 
Once the coefficients~$\alpha_{i}^{(t)}$ at the given step have been computed, the new estimate~$\wh \S^{(t+1)}$ reduces to the sample covariance matrix of the weighted observations, which can be computed in time~$O(d^2m)$ where~$m$ is the size of the batch. The total cost of these computations in the course of the algorithm is thus~$O(d^2n)$. 
As for~$\alpha_{i}^{(t)}$, they are obtained by first performing the Cholesky decomposition~\citep{golub2012matrix} of~$\wh\S^{(t)}_{\la_t}$, i.e., finding the unique lower-triangular matrix~$\bR_t$ such that $\bR_t \bR_t^\top = \wh\S^{(t)}_{\la_t}$ which requires~$O(d^3)$ in time and~$O(d^2)$ in space,\footnotemark~and then computing each product~$\bR^{-1}_t X_i^{(t+1)}$ by solving the corresponding linear system in~$O(d^2)$.
\footnotetext{Choletsky decomposition is known to work whenever the condition number (in our case~$\|\S\|/{\lam}$) is dominated by the inverse machine precision; when this condition does not hold, one could add some extra tricks such as pivoting, which still results in~$O(d^3)$ complexity~\citep{golub2012matrix}.}
The total complexity of Algorithm~\ref{alg:calibrated-basic} is thus
\[
O\left(d^2 n + d^3 \log({L}/{\lambda})\right) \; \textrm{in time}, \quad O(d^2) \; \textrm{in space}.
\]
Moreover, the time complexity becomes $O(d^2 n)$ when~$n \gg d \log({L}/{\la})$; as we show next, this is anyway required to obtain a statistical performance guarantee.
Note moreover that it is possible to obtain the non-regularized version of \cref{eq:affine-invariant-intro} by choosing $\la = O(\la_{\min}(\S))$, 
in time~$O(d^2 n + d^3 \log(\textrm{cond}(\S)))$, where $\cond(\S) = \|\S\|/\lmin$ is the condition number of $\S$. 
\begin{remark}
In practice, sample splitting in Algorithm~\ref{alg:calibrated-basic} could be avoided, and iterations could be performed on the same sample.
We expect our statistical guarantees in~\cref{sec:main-result} to extend to this setup.
\end{remark}

Next we present a statistical guarantee for Algorithm~\ref{alg:calibrated-basic}, and suggest a way to select the parameters.


\begin{algorithm}
\caption{Robust Calibrated Covariance Estimation}
\begin{algorithmic}[1]
\label{alg:calibrated-basic}
\REQUIRE $X_1,..., X_n \in \R^d$,~$\delta \in (0,1]$, regularization level~$\lambda \le \|\S\|$,~$L \ge \|\S\|$, truncation level~$\theta > 0$
\STATE $\lam_0 = L,~T = \lceil \log_2(L/\lam) \rceil, ~m = \left\lfloor n/(2(T+1))\right\rfloor$
\STATE $\alpha^{(0)}_i = L \rho_\theta(\|X_i\|/\sqrt{L}),$ for~$i  \in \{1,\dots,m\}$, with~$\rho_\theta(x) = \psi_\theta(x^2)/x^2$ and~$\psi_{\theta}(\cdot)$ as in \cref{eq:truncation-map}
\STATE  $\Shat^{(0)} = \frac{1}{m} \sum_{i=1}^m \alpha^{(0)}_i X_i \otimes X_i$
\FOR{$t  = 0$ to $T-1$}
\STATE $(X^{(t+1)}_1:X^{(t+1)}_m) = (X_{m(t+1)+1}:X_{m(t+1)+m})$ \COMMENT{Obtain a new batch} 
\STATE $\bR_{t} = \textrm{Cholesky}(\Shat^{(t)}_{\la_{t}})$
\STATE $\alpha_i^{(t+1)} = \rho_\theta(\|\bR_{t}^{-1}X_i^{(t+1)}\|),$ for~$i  \in \{1,\dots,m\}$
\STATE $\Shat^{(t+1)} = \frac{1}{m} \sum_{i=1}^m \alpha_i^{(t+1)} X^{(t+1)}_i \otimes X^{(t+1)}_i$
\STATE $\lam_{t+1} = {\lam_{t}}/{2}$
\ENDFOR
\STATE $r = n-m(T+1)$ \COMMENT{Size of the remaining sample, roughly~$n/2$}
\STATE $\theta_T = 2\theta (T+1)^{1/2}\left(1+\log(T+1)/\log(4d/\delta)\right)^{1/2}$ \COMMENT{Final truncation level}
\label{alg-line:final-truncation}
\STATE $(X^{\star}_1:X^{\star}_r) = (X_{mT+1}:X_{n})$ \COMMENT{Remaining sample} 
\STATE $\bR_T = \textrm{Cholesky}(\Shat^{(T)}_{\la_{T}})$
\STATE $\alpha^\star_i = \rho_{\theta_T}(\|\bR_T^{-1} X^\star_{i}\|),$ for~$i  \in \{1,\dots,r\}$
\STATE $\Shat^\star = \frac{1}{r} \sum_{i=1}^r \alpha^\star_i X^\star_i \otimes X^\star_i$ \COMMENT{Final estimate}
\ENSURE $\Shat^\star$
\end{algorithmic}
\end{algorithm}

\section{Statistical Guarantee}
\label{sec:main-result}

In Theorem~\ref{th:calibrated-basic} below, we show that the estimator produced by Algorithm~\ref{alg:calibrated-basic} achieves a high-probability bound of the type~\eqref{eq:intro-relative-scale}
requiring only the existence of the fourth-order moments of~$X$, and the correct choice of the truncation level~$\theta$.
We begin with the lemma that justifies the proposed update rule.

\begin{lemma}
\label{lem:stability}
Let $\wh\S$ be a symmetric estimate of $\S$ such that, for some~$\lambda > 0$,
\begin{equation}
\label{eq:stability-premise}
\|\S_{\lam}^{-1/2}(\wh\S-\S)\S_{\lam}^{-1/2}\| \le 1/2.
\end{equation}
Conditioned on $\Shat$, let~$X_1,\dots,X_m$ be i.i.d., have zero mean, covariance~$\S$, and finite fourth-order moments. Let $\kappa$ be the associated (conditional) kurtosis as in \cref{ass:kurtosis}. Define $\S^{(+)}$ as
\begin{equation}
\label{eq:stability-step}
\wh \S^{(+)} := \frac{1}{m} \sum_{j=1}^m \rho_{\theta}(\|\wh\S_{\lam}^{-1/2} X_j\|) X_j \otimes X_j, 
\end{equation}
with $\rho_\theta$ defined in Eqs.~\eqref{eq:truncation-map}--\eqref{def:minsker}. 
Choose~$\theta \ge 2\sqrt{2}\k^2\sqrt{{m \df_{\lam}(\S)}/{\log(2d/\delta)}},$
where~$\df_{\lam}(\S)$ is defined by~\cref{def:degrees-of-freedom}. 
Then with conditional probability at least~$1-\delta$ over~$(X_1,...,X_m)$ it holds
\begin{equation}
\label{eq:stability-result}
\left\|\S_{\lam/2}^{-1/2}(\wh\S^{(+)}-\S )\S_{\lam/2}^{-1/2}\right\| \le \frac{6\theta \log(2d/\delta)}{m}.
\end{equation}

\end{lemma}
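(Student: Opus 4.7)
The plan is to apply Theorem~\ref{th:minsker} conditionally on $\wh\S$ to the \emph{whitened} observations $Z_j := \wh\S_{\lam}^{-1/2} X_j$, and then transfer the resulting spectral-norm guarantee back to the $\S_{\lam/2}^{-1/2}$-transformed norm in which the conclusion is stated. Conditionally on $\wh\S$, the $Z_j$'s are i.i.d.\ with zero mean and covariance $\J := \wh\S_{\lam}^{-1/2} \S \wh\S_{\lam}^{-1/2}$, and they inherit the kurtosis bound $\kappa$ by affine invariance of~\eqref{ass:kurtosis}. Moreover, a direct rearrangement gives $\wh\S^{(+)} = \wh\S_{\lam}^{1/2}\, \wh\bJ\, \wh\S_{\lam}^{1/2}$, where $\wh\bJ := \tfrac{1}{m}\sum_j \rho_\theta(\|Z_j\|)\, Z_j \otimes Z_j$ is exactly the estimator $\wh\S^{\WM}$ of~\eqref{def:minsker} applied to $(Z_1,\dots,Z_m)$ and targeting $\J$.

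The premise~\eqref{eq:stability-premise} rewrites as $-\tfrac{1}{2}\S_{\lam} \preccurlyeq \wh\S - \S \preccurlyeq \tfrac{1}{2}\S_{\lam}$, which yields $\tfrac{1}{2}\S_{\lam} \preccurlyeq \wh\S_{\lam} \preccurlyeq \tfrac{3}{2}\S_{\lam}$ and, via the elementary inclusion $\S_{\lam}\preccurlyeq 2\S_{\lam/2}$, also $\wh\S_{\lam} \preccurlyeq 3\S_{\lam/2}$. From this single PSD chain I would extract three consequences that enter the rest of the argument:
\begin{align*}
\|\S_{\lam/2}^{-1/2}\wh\S_{\lam}^{1/2}\|^2 &= \|\S_{\lam/2}^{-1/2}\wh\S_{\lam}\S_{\lam/2}^{-1/2}\| \le 3,\\
\|\J\| &\le 2 \,\|\S_{\lam}^{-1/2}\S\S_{\lam}^{-1/2}\| \le 2,\\
\tr(\J) &= \tr(\wh\S_{\lam}^{-1}\S) \le 2\,\tr(\S_{\lam}^{-1}\S) = 2\,\df_{\lam}(\S).
\end{align*}

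Next, I would use the affine-invariant variance bound~\eqref{eq:variance-bound} applied to $Z$ to control the second-moment statistic of~\cref{th:minsker}:
\[
W := \bigl\|\E\bigl[\|Z\|^2\, Z\otimes Z \,\big|\, \wh\S\bigr] \bigr\| \le \kappa^4\,\|\J\|\,\tr(\J) \le 4\kappa^4\, \df_{\lam}(\S).
\]
The hypothesis $\theta \ge 2\sqrt{2}\,\kappa^2\sqrt{m\,\df_{\lam}(\S)/\log(2d/\delta)}$ is exactly what is needed for $\overline W := \theta^2 \log(2d/\delta)/m$ to dominate $W$; equivalently, the calibration $\theta = \sqrt{m\overline W/\log(2d/\delta)}$ prescribed by Theorem~\ref{th:minsker} is realized for this $\overline W$. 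Applying~\cref{th:minsker} to $(Z_1,\dots,Z_m)$ conditionally on $\wh\S$ gives, with conditional probability at least $1-\delta$,
\[
\|\wh\bJ - \J\| \le 2\sqrt{\overline W \log(2d/\delta)/m} = \frac{2\theta\log(2d/\delta)}{m}.
\]
Sandwiching back through the first PSD inequality above yields
\[
\bigl\|\S_{\lam/2}^{-1/2}(\wh\S^{(+)}-\S)\S_{\lam/2}^{-1/2}\bigr\| \le \bigl\|\S_{\lam/2}^{-1/2}\wh\S_{\lam}^{1/2}\bigr\|^2\,\|\wh\bJ - \J\| \le \frac{6\theta\log(2d/\delta)}{m},
\]
which is the stated bound.

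The main obstacle is conceptual rather than computational: one has to recognize why the regularization level must be halved in the conclusion. The truncation inside~\eqref{eq:stability-step} whitens with the \emph{estimated} preconditioner $\wh\S_{\lam}$ rather than the unknown $\S_{\lam}$, so~\cref{th:minsker} naturally controls the error in the $\wh\S_{\lam}^{1/2}$-transformed norm, and the halving $\lam \mapsto \lam/2$ is precisely the constant-factor budget needed to absorb the discrepancy between $\wh\S_{\lam}$ and $\S_{\lam}$ permitted by~\eqref{eq:stability-premise}. Once this is set up, everything else is bookkeeping: verifying that kurtosis, the variance bound~\eqref{eq:variance-bound}, and the degrees-of-freedom $\df_{\lam}(\S)$ transform in the intended way, and that~\cref{th:minsker} can be applied in the conditional sense.
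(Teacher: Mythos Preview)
Your proof is correct and follows essentially the same route as the paper: whiten with $\wh\S_{\lam}^{-1/2}$, apply Theorem~\ref{th:minsker} to the resulting $Z_j$'s, and sandwich back to the $\S_{\lam/2}^{-1/2}$-norm. Your PSD bookkeeping is in fact slightly more direct than the paper's (you get $\tr(\J)\le 2\,\df_{\lam}(\S)$ and hence $W\le 4\kappa^4\df_{\lam}(\S)$, whereas the paper goes through a trace H\"older argument to obtain $W\le 8\kappa^4\df_{\lam}(\S)$; and you collapse the two-step sandwich $\S_{\lam/2}^{-1/2}\to\S_{\lam}^{-1/2}\to\wh\S_{\lam}^{-1/2}$ into a single bound $\|\S_{\lam/2}^{-1/2}\wh\S_{\lam}^{1/2}\|^2\le 3$), but the structure and the final constant are identical.
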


Lemma~\ref{lem:stability} is proved in Appendix~\ref{sec:proof-stability}. 
Its role is to guarantee the stability of the iterative process in Algorithm~\ref{alg:calibrated-basic} when we pass to the next regularization level by~$\lambda^{(t)} \gets \lambda^{(t-1)}/2$. Indeed, if the size~$m$ of the new batch is large enough, the right-hand side of~\cref{eq:stability-result} can be made smaller than~$1/2$, which allows to apply Lemma~\ref{lem:stability} sequentially. 
We are now ready to present the guarantee for Algorithm~\ref{alg:calibrated-basic}.
\begin{theorem}
\label{th:calibrated-basic}
Let $X_1, \dots, X_n$ be i.i.d. zero-mean~random vectors in $\R^d$ satisfying~$\E[X_i \otimes X_i] = \S$ and the kurtosis assumption~\eqref{ass:kurtosis}. 
Let Algorithm~\ref{alg:calibrated-basic} be run with~$\delta \in (0,1]$,~$0 < \lam \le \|\S\| \le L$, and
\begin{equation}
\label{eq:calibrated-basic-threshold}
\theta \ge \theta_* := 2\k^2\sqrt{\frac{n \df_{\lam}(\S)}{\q \log(4\q d/\delta)}}, \quad \text{where} \quad \q := \lceil \log_2(L/\lam) \rceil + 1.
\end{equation}
Whenever the sample size satisfies
\begin{equation}
\label{eq:calibrated-basic-min-sample-size}
n \ge 48 \q \theta \log(4\q d/\delta),
\end{equation}
the resulting estimator~$\wh \S$ with probability at least~$1-\delta$ satisfies
\begin{equation}
\label{eq:calibrated-basic-result}
\left\|\S_\lam^{-1/2} ( \wh\S - \S ) \S_\lam^{-1/2}\right\| \le \frac{24\theta \sqrt{\q\log(4\q d/\delta)\log(4d/\delta)}}{n}.
\end{equation}
\end{theorem}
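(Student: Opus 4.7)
The strategy is to view Algorithm~\ref{alg:calibrated-basic} as a telescoping cascade of applications of Lemma~\ref{lem:stability}: one per iteration of the outer loop, plus a final ``boosted'' call on the residual sample of size $r \ge n/2$ at the inflated truncation level $\theta_T$. The loop invariant I would maintain is
\[
\|\S_{\lam_t}^{-1/2}(\wh\S^{(t)}-\S)\S_{\lam_t}^{-1/2}\| \le 1/2, \quad t = 0, 1, \dots, T,
\]
with $\lam_t = L/2^t$ halved at each step. Since $L \ge \|\S\|$, the base case is essentially free: the trivial estimator $\wh\S = 0$ satisfies $\|\S_L^{-1/2}\S\,\S_L^{-1/2}\| = \|\S(\S+L\Id)^{-1}\| \le 1/2$, so Lemma~\ref{lem:stability} applies with input $(\wh\S,\lam)=(0,L)$ on the first batch (lines~2--3) and produces $\wh\S^{(0)}$ meeting the invariant at $\lam_0$, with probability at least $1-\delta/(2\q)$. (Equivalently, one can cite Theorem~\ref{th:minsker} for the rescaled vectors $X_i/\sqrt{L}$.)

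For the inductive step at iteration $t$, the estimate $\wh\S^{(t)}$ is measurable with respect to the first $t+1$ batches and therefore independent of the fresh batch $X^{(t+1)}_1,\dots,X^{(t+1)}_m$ of size $m = \lfloor n/(2\q)\rfloor$. Conditioning on $\wh\S^{(t)}$, I would invoke Lemma~\ref{lem:stability} with failure probability $\delta/(2\q)$. Its truncation hypothesis becomes $\theta \ge 2\sqrt{2}\k^2\sqrt{m\,\df_{\lam_t}(\S)/\log(4\q d/\delta)}$, which is implied by $\theta \ge \theta_*$ together with the bounds $\df_{\lam_t}(\S) \le \df_\lam(\S)$ (since $\lam_t \ge \lam$ and $\df_\cdot(\S)$ is nonincreasing) and $m \le n/(2\q)$. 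The lemma then yields
\[
\|\S_{\lam_{t+1}}^{-1/2}(\wh\S^{(t+1)}-\S)\S_{\lam_{t+1}}^{-1/2}\| \le \frac{6\theta\log(4\q d/\delta)}{m},
\]
and the sample-size hypothesis~\eqref{eq:calibrated-basic-min-sample-size} is precisely what is needed for this right-hand side to be at most $1/2$, propagating the invariant to $\lam_{t+1} = \lam_t/2$.

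After the loop, $\wh\S^{(T)}$ satisfies the invariant at $\lam_T \in [\lam/2,\lam]$, by the choice of $T = \lceil \log_2(L/\lam)\rceil$. I would then apply Lemma~\ref{lem:stability} one final time with the residual sample of size $r \ge n/2$, failure probability $\delta/2$, and the inflated threshold $\theta_T$ (line~\ref{alg-line:final-truncation}). The inflation factor $\sqrt{\q(1+\log\q/\log(4d/\delta))} = \sqrt{\q\log(4\q d/\delta)/\log(4d/\delta)}$ is calibrated precisely so that the lemma's truncation hypothesis remains fulfilled at this tighter failure level and with the (at most doubled) quantity $\df_{\lam_T}(\S) \le 2\df_\lam(\S)$. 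The output is
\[
\|\S_{\lam_T/2}^{-1/2}(\wh\S^\star-\S)\S_{\lam_T/2}^{-1/2}\| \le \frac{6\theta_T\log(4d/\delta)}{r},
\]
and since $\lam_T/2 \le \lam$ this upgrades by operator monotonicity of $\lam \mapsto \S_\lam^{-1}$ to the same bound with $\S_\lam^{-1/2}$ in place of $\S_{\lam_T/2}^{-1/2}$. Substituting $r \ge n/2$ and the explicit form of $\theta_T$ reproduces the right-hand side of~\eqref{eq:calibrated-basic-result} exactly. A union bound over the $\q$ iterative stages (total failure probability $\delta/2$) and the final stage ($\delta/2$) closes the argument.

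The main obstacle is the coupled calibration of batch size, truncation threshold, and failure-probability budget across the $\q+1$ stages. In particular, the inflation factor in $\theta_T$ must be chosen so that the final call of Lemma~\ref{lem:stability} is simultaneously (i) admissible at the sharper probability $\delta/2$, i.e.\ with $\log(4d/\delta)$ in place of $\log(4\q d/\delta)$, while still being driven by the user-facing threshold $\theta \ge \theta_*$, and (ii) yields a right-hand side matching~\eqref{eq:calibrated-basic-result} exactly. Once this choice is pinned down, the rest reduces to the monotonicity of $\lam \mapsto \S_\lam$, the standard inequality $\df_{\lam/2}(\S) \le 2\df_\lam(\S)$, and elementary bookkeeping of constants.
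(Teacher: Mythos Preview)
Your proposal is correct and follows essentially the same three-stage structure as the paper's proof: establish the invariant at~$\lam_0 = L$ for~$\wh\S^{(0)}$, propagate it through~$t=1,\dots,T$ via Lemma~\ref{lem:stability} with batch size~$m$ and failure budget~$\delta/(2\q)$ per step, and then apply the lemma once more on the residual sample with the inflated threshold~$\theta_T$ at failure level~$\delta/2$. The only cosmetic difference is in the base case, where the paper invokes Theorem~\ref{th:minsker} directly on the rescaled batch~$X_i/\sqrt{L}$ while you frame it as Lemma~\ref{lem:stability} applied to the trivial input~$\wh\S=0$ (which indeed satisfies the premise since~$\|\S(\S+L\Id)^{-1}\|\le 1/2$ when~$L\ge\|\S\|$); you correctly note these are equivalent.
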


The above theorem shows that when the conditions on $\theta$ and $n$ are met, the proposed estimator satisfies an affine-invariant error bound with accuracy of the same order as the one available for the sample covariance estimator (cf.~\cref{eq:error-sample-covariance-relative}) under the more stringent sub-gaussian assumption.
This is made explicit in the next corollary, where we simply put $\theta = \theta_*$ as suggested by~\cref{eq:calibrated-basic-threshold}, 
obtaining the bound (cf.~\cref{eq:calibrated-basic-result-simplified}) that matches \cref{eq:error-sample-covariance-relative} up to a constant factor and the replacement of~$\bar \kappa$ with~$\kappa$.

\begin{corollary}
Under the premise of Theorem~\ref{th:calibrated-basic}, assume that
\begin{equation}
\label{eq:calibrated-basic-min-sample-size-simplified}
n \ge 96^2 \k^4 \q \df_{\lam}(\S) \log(4\q d/\delta),
\end{equation}
and choose~$\theta = \theta_*$, cf.~\cref{eq:calibrated-basic-threshold}. Then the estimator given by Algorithm~\ref{alg:calibrated-basic}  w.p. at least~$1-\delta$ satisfies
\begin{equation}
\label{eq:calibrated-basic-result-simplified}
\left\|\S_\lam^{-1/2} ( \wh\S - \S ) \S_\lam^{-1/2}\right\| \le 48\k^2\sqrt{\frac{\df_{\lam}(\S)\log(4d/\delta)}{n}}.
\end{equation}
\end{corollary}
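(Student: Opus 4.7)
The plan is to derive the corollary as a direct algebraic specialization of \cref{th:calibrated-basic} to the boundary choice $\theta = \theta_*$ from \eqref{eq:calibrated-basic-threshold}. Two things must be verified: (a) the sample-size hypothesis \eqref{eq:calibrated-basic-min-sample-size-simplified} of the corollary implies the sample-size hypothesis \eqref{eq:calibrated-basic-min-sample-size} of the theorem at $\theta = \theta_*$; and (b) substituting $\theta = \theta_*$ into the error bound \eqref{eq:calibrated-basic-result} collapses it to \eqref{eq:calibrated-basic-result-simplified}.

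For (a), I would substitute $\theta_{*} = 2\k^{2}\sqrt{n\df_{\lam}(\S)/(\q\log(4\q d/\delta))}$ into \eqref{eq:calibrated-basic-min-sample-size}, obtaining the condition $n \ge 96\, \k^{2}\,\q\log(4\q d/\delta)\,\sqrt{n\df_{\lam}(\S)/(\q\log(4\q d/\delta))}$, which tidies up to $n \ge 96\,\k^{2}\sqrt{n\,\q\,\df_{\lam}(\S)\log(4\q d/\delta)}$. Squaring both sides and dividing by $n$ recovers exactly \eqref{eq:calibrated-basic-min-sample-size-simplified} with constant $96^{2}$, so the assumption of the corollary licenses the application of \cref{th:calibrated-basic}.

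For (b), inserting $\theta_{*}$ into the numerator of \eqref{eq:calibrated-basic-result} produces the product $\sqrt{n\df_{\lam}(\S)/(\q\log(4\q d/\delta))}\cdot\sqrt{\q\log(4\q d/\delta)\log(4d/\delta)}$, in which the $\q\log(4\q d/\delta)$ factors cancel, leaving $\sqrt{n\,\df_{\lam}(\S)\log(4d/\delta)}$. Together with the scalar $24 \cdot 2 = 48$ and the division by $n$, this produces exactly $48\k^{2}\sqrt{\df_{\lam}(\S)\log(4d/\delta)/n}$, matching \eqref{eq:calibrated-basic-result-simplified}.

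There is essentially no obstacle here—the corollary is pure book-keeping, and the only care needed is in tracking the constants through the two cancellations above. The substantive content of the result (the iterative calibration analysis, the conditional application of \cref{th:minsker} via \cref{lem:stability}, and the union bound over $\q$ refinement stages) is already packaged inside \cref{th:calibrated-basic}.
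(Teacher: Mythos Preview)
Your proposal is correct and matches the paper's approach exactly: the paper presents this corollary without a separate proof, merely remarking that one ``simply put[s] $\theta = \theta_*$'' in \cref{th:calibrated-basic}, and your two-step verification (that \eqref{eq:calibrated-basic-min-sample-size-simplified} implies \eqref{eq:calibrated-basic-min-sample-size} at $\theta=\theta_*$, and that \eqref{eq:calibrated-basic-result} collapses to \eqref{eq:calibrated-basic-result-simplified}) is precisely the intended book-keeping.
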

We conclude with a remark on choosing~$L$, while in \cref{sec:lepski} we will provide an adaptive version of the estimator based on a version of Lepskii's method~\cite{lepski1991} in which~$\theta$ is tuned automatically.
\begin{remark}[Choosing~$L$]
One can simply put~$L = 2\|\wh\S^{\WM}\|$, requiring that $\la \leq \frac{2}{3}\|\wh\S^{\WM}\|$, with $\wh\S^{\WM}$ defined in~\cref{def:minsker} and using an independent subsample of size~$n/(T+1)$. Indeed, under \cref{eq:calibrated-basic-min-sample-size}, the result of Theorem~\ref{th:minsker} ensures that~$\frac{2}{3}\|\wh\S^{\WM}\| \le \|\S\| \le 2\|\wh\S^{\WM}\|$ with probability~$\ge 1-\delta$.
\end{remark}

\subsection{Proof of Theorem~\ref{th:calibrated-basic}}
Note that~$\q = T + 1$ is the number of batches processed by the end of the for-loop in~Algorithm~\ref{alg:calibrated-basic}. Thus, using that~$m = \lfloor n/ (2\q) \rfloor$ and~$n \ge 4\q$ (see~\cref{eq:calibrated-basic-threshold}--\eqref{eq:calibrated-basic-min-sample-size} and use that~$\kappa \ge 1$,~$\df_{\lam} \ge 1$), we get
\begin{equation}
\label{eq:subsample-size-bound}
{n}/{(4\q)} \le m \le {n}/{(2\q)}.
\end{equation}
We will proceed by induction over the steps~$0 \le t \le T$, showing that
\begin{equation}
\label{eq:calibrated-basic-induction-goal}
\left\|\S_{\lam_t}^{-1/2} ( \wh\S^{(t)} - \S ) \S_{\lam_t}^{-1/2} \right\| \le {1}/{2} 
\end{equation}
holds for~$t = 0, ..., T$ with probability~$\ge (1-\delta/(2\q))^{t+1}$. Then we will derive~\cref{eq:calibrated-basic-result} as a corollary.

\proofpoint{1}
For the base, we can apply~\cref{th:minsker}, exploiting that~$\wh\S^{(0)} = L\wh\S^{\WM}$ for the (renormalized) initial batch~$\frac{1}{\sqrt{L}}(X_1, ..., X_m)$. 
Thus, with probability at least~$1-{\delta}/{(2\q)}$ over this batch, it holds
\begin{equation}
\label{eq:calibrated-basic-induction-base}
\frac{1}{L} \big\|\wh\S^{(0)} - \S \big\| \le \frac{2\theta \log(4\q d/\delta)}{m} \le \frac{8\q\theta\log(4\q d/\delta)}{n},
\end{equation}
provided that (recall the condition~
$\theta \ge \sqrt{{n W}/{\log(2d/\delta)}}$ in~\cref{th:minsker}
and combine it with~\cref{eq:variance-bound}):
\[
\theta \ge \k^2 {\|\S\|}/{L} \cdot \sqrt{{m\reff(\S)}/{\log(4\q d/\delta)}}.
\]
But this follows from~\cref{eq:calibrated-basic-threshold}, since~$\|\S\| \le L$,~$m \le n/(2\q)$, and~$\reff(\S) \le 2\df_{\|\S\|}(\S) \le 2\df_{\lam}(\S).$ 
Noting that~$\|\S_{L}^{-1}\| \le 1/L$, from~\cref{eq:calibrated-basic-induction-base,eq:calibrated-basic-min-sample-size} we get
\[
\left\|\S_{L}^{-1/2} (\wh\S^{(0)} - \S ) \S_{L}^{-1/2} \right\| \le \frac{8\q\theta\log(4\q d/\delta)}{n} \le \frac{1}{6}.
\]
Since~$\lam_0 = L$, the induction base is proved. Note that when~$T = 0$, this already results in~\cref{eq:calibrated-basic-induction-goal}.

\proofpoint{2}
Let~$T \ge 1$ and~$0 \le t \le T-1$. For the induction step, we apply Lemma~\ref{lem:stability} conditionally on the first~$t$ iterations, with~$\wh\S^{(t)}$ in the role of the current estimate,~$\lam_{t}$ in the role of the current regularization level, and~$(X_{1}^{(t+1)},..., X_m^{(t+1)})$ as the new batch (which is independent from~$\wh\S^{(t)}$ by construction). 
By the induction hypothesis, we have~\cref{eq:calibrated-basic-induction-goal} with conditional probability~$\ge (1-\delta/(2\q))^{t+1}$. 
By Lemma~\ref{lem:stability}, since~$\lam_t \ge \lam$ (and thus~$\df_{\lam_t}(\S) \le \df_{\lam}(\S)$),~\cref{eq:calibrated-basic-threshold}, when combined with the upper bound~in~\cref{eq:subsample-size-bound}, guarantees that with conditional probability~$\ge 1-\delta/(2\q)$ over the new batch, 
\[
\left\|\S_{\lam_{t+1}}^{-1/2} (\wh\S^{(t+1)}-\S )\S_{\lam_{t+1}}^{-1/2}\right\| \le {6\theta \log(4\q d/\delta)}/{m} \le {24\q \theta \log(4\q d/\delta)}/{n} \le {1}/{2}.
\]
Here in the second transition we used the lower bound of~\cref{eq:subsample-size-bound}, and in the last transition we used~\cref{eq:calibrated-basic-min-sample-size}. 
Thus, the induction claim is proved. 
In particular, we have obtained that the bound
\[
\left\|\S_{\lam_T}^{-1/2} ( \wh\S^{(T)} - \S ) \S_{\lam_T}^{-1/2}\right\| \le {1}/{2}
\]
holds with probability at least~$(1-\delta/(2\q))^{\q} \ge 1-\delta/2$ over the first~$\q = T+1$ batches. 

\proofpoint{3}
Finally, to obtain \cref{eq:calibrated-basic-result}, we apply Lemma~\ref{lem:stability} once again, this time conditioning on~$\wh\S^{(T)}$, and using the last batch~$(X_{n-r+1},..., X_{n})$ with the final estimator~$\wh \S$ in the role of~$\wh\S^{(+)}$.
Note that the first condition~\cref{eq:stability-premise} of Lemma~\ref{lem:stability} follows from the just proved induction claim. 
On the other hand, by~\cref{eq:calibrated-basic-threshold} the final truncation level~$\theta_T$, cf.~line~\ref{alg-line:final-truncation} of Algorithm~\ref{alg:calibrated-basic}, satisfies
\[
\theta_T = 2\theta\sqrt{{\q\log(4\q d/\delta)}/{\log(4d/\delta)}} \ge 4\k^2 \sqrt{{n \df_{\lam}(\S)}/{\log(4d/\delta)}}. 
\]
The number of degrees of freedom is a stable quantity: we can easily prove (see Lem.~\ref{lem:df} in Appendix) that~$\df_{\lam/2}(\S) \le 2\df_{\lam}(\S)$. 
On the other hand,~$\df_{\lam_{T}} \le \df_{\lam/2}$ since~$\lam_T \ge \lam/2$. 
Using that, we have
\[
\theta_T \ge 2\sqrt{2}\k^2 \sqrt{{n \df_{\lam_{T}}(\S)}/{\log(4d/\delta)}} \ge 2\sqrt{2}\k^2 \sqrt{{r \df_{\lam_{T}}(\S)}/{\log(4d/\delta)}},
\]
meeting the requirement on the truncation level imposed in Lemma~\ref{lem:stability}.
Applying the lemma, and using that~$r \ge n/2$, we get that with conditional probability~$\ge 1-\delta/2$ over the last batch~$(X_{n-r+1},..., X_{n})$,
\[
\left\|\S_{\lam_{T}/2}^{-1/2} ( \wh\S - \S ) \S_{\lam_T/2}^{-1/2}\right\| 
\le \frac{6\theta_T \log(4d/\delta)}{r} \le \frac{24\theta \sqrt{\q\log(4\q d/\delta) \log(4d/\delta)}}{n}.
\]
Since~$\lam_T \le \lam$ implies~$\|\S_{\lam_T/2} \S_{\lam}^{-1}\| \le 1$, by the union bound we arrive at~\cref{eq:calibrated-basic-result}. 
\hfill \qed

\section{Adaptive Estimator}
\label{sec:lepski}

One limitation of Algorithm~\ref{alg:calibrated-basic} is that the truncation level~$\theta$ has to be chosen in advance in order to obtain the optimal statistical performance (see Theorem~\ref{th:calibrated-basic}), and the optimal choice~$\theta_*$ (see~\cref{eq:calibrated-basic-threshold}) depends on~$\kappa,~\df_{\lam}(\S)$ that are usually unknown.
To address this, we propose an \textit{adaptive estimator} (see Algorithm~\ref{alg:calibrated-adaptive}), in which~Algorithm~\ref{alg:calibrated-basic} is combined with a Lepskii-type procedure~\citep{lepski1991}, resulting in a near-optimal guarantee without the knowledge of~$\theta_*$.
Namely, let us be given a range~$0 < \theta_{\min} \le \theta_{\max}$ known to contain~$\theta_*$ but possibly very loose, and define the logarithmic grid
\begin{equation}
\label{eq:lepski-grid}
\theta_j =  2^j \theta_{\min}, \;\; \text{where} \;\; j  \in  \cJ := \{j \in \Z: \thetamin \le \theta_j  \le 2\thetamax\}.
\end{equation}
Define also
\begin{equation}
\label{eq:lepski-bounds}
\veps_j := \frac{24\theta_j \sqrt{\q \cdot \log(4\q d |\cJ| /\delta) \cdot \log(4d |\cJ| /\delta)}}{n}.
\end{equation}
Later on we will we show (see \cref{th:calibrated-basic}) that~$\veps_j$ is the error bound, with probability at least~$1-\delta$, for the estimator produced by Algorithm~\ref{alg:calibrated-basic} with truncation level~$\theta = \theta_j$. 
In Algorithm~\ref{alg:calibrated-adaptive}, we first compute~$\veps_j$ and the basic estimators~$\wh \S_{j} := \wh \S[\theta_j]$ for all truncation levels~$\theta_j$, then select
\begin{equation}
\label{eq:lepski-choice}
\whj = \min \left\{j \in \cJ: \;\; \forall j' \ge j \;\; \text{s.t.} \;\; j' \in \cJ \;\; \text{it holds} \;\; \left\| \wh\S_{j',\la}^{-1/2} (\wh\S_{j'} - \wh\S_{j}) \wh\S_{j',\la}^{-1/2} \right\| \le 2(\veps_{j'} + \veps_j) \right\},
\end{equation}
and output~$\wh \S_{\whj}$ as the final estimator. 
In~\cref{th:calibrated-adaptive} below, we show that this estimator admits essentially the same statistical guarantee (in the sense of~\cref{eq:intro-relative-scale}) as the ``ideal'' estimator which uses~$\theta = \theta_*$.


\begin{algorithm}
\caption{Robust Calibrated Covariance Estimation with Adaptive Truncation Level}
\begin{algorithmic}[1]
\label{alg:calibrated-adaptive}
\REQUIRE{$X_1,..., X_n \in \R^d$,~$\delta \in (0,1]$, regularization level~$\lambda \le \|\S\|$,~$L \ge \|\S\|$, range~$[\theta_{\min}, \theta_{\max}]$}
\STATE Form the grid~$\cJ := \{j \in \Z: \thetamin \le \theta_j  \le 2\thetamax\}$
\FOR{$j \in \cJ$}
\STATE Compute the output~$\wh \S_{j}$ of Algorithm~\ref{alg:calibrated-basic} with truncation level~$\theta_j = 2^j \thetamin$; compute~$\veps_j$ by~\eqref{eq:lepski-bounds} 
\ENDFOR
\ENSURE{$\Shat_{\whj}$ with~$\whj \in \cJ$ selected according to~\eqref{eq:lepski-choice}}
\end{algorithmic}
\vspace{-0.2cm}
\end{algorithm}


Next we present a statistical performance guarantee for Algorithm~\ref{alg:calibrated-adaptive}. 
Its proof, given in Appendix~\ref{sec:proof-lepski}, hinges upon the observation that the matrix~$\wh\S_{j',\la}$ in the error bound of~\cref{eq:lepski-choice} can essentially be replaced with its unobservable counterpart~$\S_{\la}$; this makes the analyzed errors \textit{additive}, so that the usual argument for Lepskii's method could be applied.

\begin{theorem}
\label{th:calibrated-adaptive}
Assume~\eqref{ass:kurtosis}, and let Algorithm~\ref{alg:calibrated-adaptive} be initialized with~$\lam \le \|\S\|$,~$L \ge \|\S\|$,~$\delta \in (0,1]$, and a range~$[\thetamin, \thetamax]$ containing the optimal truncation level~$\theta_*$ given by~\cref{eq:calibrated-basic-threshold}.
Moreover, let 
\begin{equation}
\label{eq:calibrated-lepski-min-sample-size-thetamax}
n \ge 96 \q \thetamax \log(4 \q d|\cJ|/\delta),
\end{equation}
where~$\q$ is defined in~\cref{eq:calibrated-basic-threshold}, and~$|\cJ| \le 1 + \log_2(\thetamax/\thetamin)$ is the cardinality of the grid defined in~\cref{eq:lepski-grid}.
Then the estimator~$\wh \S_{\whj}$ produced by Algorithm~\ref{alg:calibrated-adaptive} with probability at least~$1-\delta$ satisfies
\begin{equation*}
\left\|\S_\lam^{-1/2} ( \wh \S_{\whj} - \S ) \S_\lam^{-1/2}\right\| \le 720\k^2\sqrt{\frac{(1+\rho)\df_{\lam}(\S)\log (4d |\cJ| /\delta)}{n}}, \quad \text{where} \;\;  \rho :=  \frac{\log |\cJ|}{\log(4 \q d/\delta)}.
\end{equation*}
\end{theorem}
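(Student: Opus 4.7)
The plan is a standard Lepskii balancing argument, adapted to reconcile the selection rule~\eqref{eq:lepski-choice} (which is stated in the data-dependent $\wh\S_{j',\la}$-metric) with Theorem~\ref{th:calibrated-basic}'s error control (which lives in the fixed $\S_\la$-metric). The strategy is to exhibit a good event on which $\wh\S_{j,\la}$ is a constant-factor L\"owner approximation of $\S_\la$ uniformly over $j \in \cJ$, so that the two metrics can be swapped up to universal constants, and then carry out the usual Lepskii bookkeeping in the (oracle) $\S_\la$-metric.

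Concretely, I would define the oracle index $j^\star := \min\{j \in \cJ: \theta_j \ge \theta_*\}$, so that $\theta_* \le \theta_{j^\star} \le 2\theta_*$ by the geometric spacing of $\cJ$, and hence $\veps_{j^\star} \le 2\veps_*$, where $\veps_*$ is~\eqref{eq:lepski-bounds} evaluated with $\theta_*$ in place of $\theta_j$. Applying Theorem~\ref{th:calibrated-basic} at confidence $\delta/|\cJ|$ to each $j \in \cJ$ with $j \ge j^\star$ and taking a union bound produces an event $\cE$ of probability $\ge 1-\delta$ on which $\|\S_\la^{-1/2}(\wh\S_j - \S)\S_\la^{-1/2}\| \le \veps_j$ for all such $j$. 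The hypothesis~\eqref{eq:calibrated-lepski-min-sample-size-thetamax} is sized precisely so that (i)~Theorem~\ref{th:calibrated-basic} applies at confidence $\delta/|\cJ|$ for every $j \in \cJ$, and (ii)~$\veps_j \le 1/2$ for \emph{every} $j \in \cJ$; the latter follows from the elementary bound $\sqrt{\q\log(4\q d|\cJ|/\delta)\log(4d|\cJ|/\delta)} \le \q\log(4\q d|\cJ|/\delta)$. On $\cE$ these combine to give $\tfrac{1}{2}\S_\la \preceq \wh\S_{j,\la} \preceq \tfrac{3}{2}\S_\la$ for every $j \ge j^\star$, which by a routine similarity calculation (write $\wh\S_{j,\la}^{-1/2} M \wh\S_{j,\la}^{-1/2} = T\,(\S_\la^{-1/2} M \S_\la^{-1/2})\,T^{\T}$ with $T := \wh\S_{j,\la}^{-1/2}\S_\la^{1/2}$, noting $\|T\|^2 = \|\wh\S_{j,\la}^{-1/2}\S_\la\wh\S_{j,\la}^{-1/2}\| \le 2$) yields the L\"owner sandwich
\[
\tfrac{2}{3}\bigl\|\S_\la^{-1/2} M \S_\la^{-1/2}\bigr\| \le \bigl\|\wh\S_{j,\la}^{-1/2} M \wh\S_{j,\la}^{-1/2}\bigr\| \le 2\bigl\|\S_\la^{-1/2} M \S_\la^{-1/2}\bigr\|
\]
for every symmetric $M$.

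The rest is standard Lepskii bookkeeping. For every $j' \ge j^\star$ in $\cJ$, the triangle inequality in the $\S_\la$-metric together with the right half of the sandwich yields
\[
\bigl\|\wh\S_{j',\la}^{-1/2}(\wh\S_{j'} - \wh\S_{j^\star})\wh\S_{j',\la}^{-1/2}\bigr\| \le 2(\veps_{j'} + \veps_{j^\star}),
\]
so $j = j^\star$ passes the criterion of~\eqref{eq:lepski-choice} and therefore $\whj \le j^\star$ on $\cE$. Invoking the criterion at $(j,j') = (\whj, j^\star)$ and transferring back to the $\S_\la$-metric via the left half of the sandwich (combined with $\veps_{\whj} \le \veps_{j^\star}$, by monotonicity of $j \mapsto \veps_j$) yields
\[
\bigl\|\S_\la^{-1/2}(\wh\S_{\whj} - \wh\S_{j^\star})\S_\la^{-1/2}\bigr\| \le \tfrac{3}{2}\cdot 2(\veps_{\whj}+\veps_{j^\star}) \le 6\veps_{j^\star}.
\]
A final triangle inequality then gives $\|\S_\la^{-1/2}(\wh\S_{\whj} - \S)\S_\la^{-1/2}\| \le 7\veps_{j^\star} \le 14\veps_*$. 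Substituting $\theta_*$ from~\eqref{eq:calibrated-basic-threshold} and using the identity $\log(4\q d|\cJ|/\delta) = (1+\rho)\log(4\q d/\delta)$ recovers the claimed rate, the constant $720$ absorbing any slack.

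The main obstacle is the L\"owner comparability step: because~\eqref{eq:lepski-choice} is defined in the random $\wh\S_{j',\la}$-metric, one must control the deviation of $\wh\S_{j,\la}$ from $\S_\la$ uniformly over the grid in order to combine the per-$j$ bounds of Theorem~\ref{th:calibrated-basic}. This uniformity is exactly what dictates the form of~\eqref{eq:calibrated-lepski-min-sample-size-thetamax}; once it is in place, the remainder is textbook.
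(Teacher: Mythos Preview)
Your proposal is correct and follows essentially the same route as the paper: define the oracle index $j^\star$, apply Theorem~\ref{th:calibrated-basic} at confidence $\delta/|\cJ|$ to all $j \ge j^\star$ via a union bound, use the sample-size condition~\eqref{eq:calibrated-lepski-min-sample-size-thetamax} to ensure $\veps_j \le 1/2$ so that the $\wh\S_{j,\la}$- and $\S_\la$-metrics are interchangeable up to constants, verify that $j^\star$ is admissible, and then finish by the standard Lepskii triangle-inequality argument. The only cosmetic difference is that you state the L\"owner sandwich as a two-sided norm inequality upfront and pivot through the $\S_\la$-metric, whereas the paper pivots through the $\wh\S_{j_*,\la}$-metric; this yields $7\veps_{j^\star}$ for you versus $\tfrac{15}{2}\veps_{j_*}$ in the paper, both comfortably within the stated constant.
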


From the result of~\cref{th:calibrated-adaptive}, we see that the adaptive estimator nearly attains the best possible stastistical guarantee that corresponds to the optimal value of the truncation level, up to the iterated logarithm of the ratio~$\thetamax/\thetamin$. 
However, the premise of~\cref{th:calibrated-adaptive} requires~$\thetamax$ to be bounded both from above and below (cf.~\cref{eq:calibrated-basic-threshold,eq:calibrated-lepski-min-sample-size-thetamax}), and the two bounds are compatible only starting from a certain sample size.
Next we state a corollary of~\cref{th:calibrated-adaptive} that explicitly specifies the required sample size (the requirement is similar to~\cref{eq:calibrated-basic-min-sample-size-simplified}), and provides a reasonables  choice of~$[\thetamin, \thetamax]$. 

\begin{corollary}
\label{cor:lepski-simple}
Assume that we have~
\begin{equation}
\label{eq:lepski-simple-condition}
n \ge 192^2 (1+\rho_J) \kappa^4 \q \df_{\lam}(\S) \log(4\q dJ/\delta),
\end{equation}
where~
$J := 1 + \left\lceil \frac{1}{2}\log_2\left({n}/{96 \q} \right) \right\rceil$ and~$\rho_J := {\log(J)}/{\log(4\q d/\delta)}.$
Then the premise of Theorem~\ref{th:calibrated-adaptive} holds for the grid~$\cJ$ with cardinality~$J$ defined by
$\thetamax = {n}/{96\q\log(4\q d J/\delta)}$,~$\thetamin = 2^{1-J} \thetamax.$
\end{corollary}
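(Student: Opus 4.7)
The plan is to verify the two preconditions of~\cref{th:calibrated-adaptive} directly from the stated choices of~$\thetamin$,~$\thetamax$, and~$J$: namely, (i)~the cardinality bound and sample-size inequality~\cref{eq:calibrated-lepski-min-sample-size-thetamax}, and (ii)~the containment~$\theta_* \in [\thetamin, \thetamax]$, where~$\theta_*$ is defined in~\cref{eq:calibrated-basic-threshold}. The grid~$\cJ$ from~\cref{eq:lepski-grid} has~$|\cJ| = J$ (up to an absorbable additive constant), since~$\log_2(\thetamax/\thetamin) = J - 1$ by the definition~$\thetamin = 2^{1-J}\thetamax$. Substituting~$\thetamax = n/(96\q\log(4\q d J/\delta))$ into~\cref{eq:calibrated-lepski-min-sample-size-thetamax} yields an equality, so (i) holds by design.

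The substance of the argument lies in (ii). For~$\theta_* \leq \thetamax$, squaring and clearing denominators turns the inequality into
\[
n\log(4\q d/\delta) \;\geq\; 4 \cdot 96^2\,\kappa^4\,\q\,\df_\lam(\S)\,\log^2(4\q d J/\delta).
\]
Using the key identity~$(1+\rho_J)\log(4\q d/\delta) = \log(4\q d/\delta) + \log J = \log(4\q d J/\delta)$, which is exactly how~$\rho_J$ is engineered, the right-hand side equals~$192^2(1+\rho_J)\kappa^4\q\,\df_\lam(\S)\log(4\q d J/\delta)\cdot\log(4\q d/\delta)$, so this is precisely the assumption~\cref{eq:lepski-simple-condition} (using~$192^2 = 4 \cdot 96^2$). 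For~$\theta_* \geq \thetamin$, I would use~$\kappa \geq 1$ and~$\df_\lam(\S) \geq 1$ to lower-bound~$\theta_* \geq 2\sqrt{n/(\q\log(4\q d/\delta))}$, while the choice~$J \geq 1 + \tfrac{1}{2}\log_2(n/(96\q))$ gives~$2^{1-J} \leq \sqrt{96\q/n}$ and hence~$\thetamin \leq \sqrt{n/(96\q)}/\log(4\q d J/\delta)$; comparing the two bounds reduces to an elementary inequality that holds since~$\log(4\q d J/\delta) \geq \log 4$.

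The main potential pitfall is bookkeeping: one must keep straight the three interlocking quantities~$J$,~$\rho_J$, and~$|\cJ|$, and spot the identity that turns the apparent~$\log^2$ in the squared form of~$\theta_* \leq \thetamax$ into the single~$\log(4\q d J/\delta)$ appearing in~\cref{eq:lepski-simple-condition}. Once this identity is noticed, the corollary is essentially a tautology: the definitions of~$\thetamax$ and~$J$ have been reverse-engineered so that the stated sample-size condition is precisely sharp for the premise of~\cref{th:calibrated-adaptive} to apply.
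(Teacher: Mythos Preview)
Your proposal is correct and follows the same approach as the paper's proof, which is quite terse: the paper simply notes that~$\thetamax$ satisfies~\cref{eq:calibrated-lepski-min-sample-size-thetamax} ``by construction'' (as an equality), that~\cref{eq:lepski-simple-condition} guarantees~$\thetamax \ge \theta_*$ ``by simple algebra'', and that~$\thetamin \le \theta_*$ is ``trivial using that~$\kappa \ge 1$ and~$\df_\lam(\S) \ge 1$''. You have correctly fleshed out that ``simple algebra'' --- in particular, you spotted the key identity~$(1+\rho_J)\log(4\q d/\delta) = \log(4\q d J/\delta)$ and the numerical coincidence~$192^2 = 4 \cdot 96^2$ that make the condition sharp --- and your treatment of~$\thetamin \le \theta_*$ via the lower bound on~$J$ is exactly in the spirit of the paper's one-line dismissal.
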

\section{Applications}
\label{sec:applications}

\subsection{Relative-Scale Bounds for Eigenvalues}
\label{sec:eigenvalues}

Recall that the bounds obtained in~\cref{th:calibrated-basic,th:calibrated-adaptive} for the estimators~$\wh \S$ given by Algorithms~\ref{alg:calibrated-basic}--\ref{alg:calibrated-adaptive} read
\begin{equation}
\label{eq:app-psd-regularized}
(1 - \veps)\S_{\lam} \prccq \wh \S_{\lam} \prccq (1+\veps)\S_{\lam}
\end{equation}
for certain accuracy~$\veps < 1/2$ and regularization level~$\lam \ge 0$, provided that the sample size is large enough.
Using that the positive-semidefinite order preserves the order of eigenvalues, we obtain the corollary of~\cref{th:calibrated-basic} for eigenvalue estimation (\cref{th:calibrated-adaptive} has a similar corollary, which we omit).
\vspace{-0.2cm}
\begin{corollary}
\label{cor:evals-estimation}
Assume that~$n$ satisfies~\cref{eq:calibrated-basic-min-sample-size-simplified},
and let~$\wh \S$ be given by Algorithm~\ref{alg:calibrated-basic} with the optimal choice of the truncation level~$\theta = \theta_*$, cf.~\cref{eq:calibrated-basic-threshold}. Let also~
$
\|\S\| = \lam_1 \ge ... \ge \lam_d = \lmin
$ 
be the ordered eigenvalues of~$\S$, and~$\wh \lam_1 \ge ... \ge \wh \lam_d$ those of~$\wh \S$. 
Finally, assume that the regularization level in Algorithm~\ref{alg:calibrated-basic} satisfies~$\lam \le \lam_{k}$ for some~$1 \le k \le d$.
Then, with probability at least~$1-\delta$ it holds
\begin{equation}
\label{eq:eval-bound}
(1 - 2\veps) \lam_i \le \wh \lam_i \le (1+2\veps) \lam_i, \quad \text{for any} \;\; 1 \le i \le k,
\end{equation}
with~$\veps$ given by~\eqref{eq:calibrated-basic-result-simplified}. As a consequence, we have~
$
{(1-2\veps)^2} \cdot {\lam_i}/{\lam_k} \le {\wh \lam_i}/{\wh \lam_{k}} \le {(1-2\veps)^{-2}} \cdot {\lam_i}/{\lam_k}. 
$

\end{corollary}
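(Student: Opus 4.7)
The plan is to reduce the eigenvalue statement to the affine-invariant bound of~\cref{th:calibrated-basic} and then translate it into bounds on unregularized eigenvalues via the Courant--Fischer min-max principle, using the assumption~$\lam \le \lam_k$ to control the slack introduced by regularization.

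First, under the sample size condition~\cref{eq:calibrated-basic-min-sample-size-simplified} and with the choice~$\theta = \theta_*$, the corollary of~\cref{th:calibrated-basic} guarantees that with probability at least~$1-\delta$ the estimator~$\wh\S$ satisfies~$\|\S_\lam^{-1/2}(\wh\S - \S)\S_\lam^{-1/2}\| \le \veps$ with~$\veps$ as in~\cref{eq:calibrated-basic-result-simplified}. This is equivalent to the two-sided affine-invariant sandwich $(1-\veps)\S_\lam \prccq \wh\S_\lam \prccq (1+\veps)\S_\lam$, i.e.,~\cref{eq:app-psd-regularized}. I will condition on this event throughout.

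Next I transfer the PSD sandwich to individual eigenvalues. Since the~$i$-th eigenvalue is a monotone functional with respect to~$\prccq$ (by the Courant--Fischer variational characterization), and since eigenvalues of~$\S + \lam\Id$ are exactly~$\lam_i + \lam$, the sandwich yields
\begin{equation*}
(1-\veps)(\lam_i + \lam) \leqs \wh\lam_i + \lam \leqs (1+\veps)(\lam_i + \lam), \quad 1 \leqs i \leqs d.
\end{equation*}
For~$i \leqs k$, the assumption~$\lam \leqs \lam_k \leqs \lam_i$ implies~$\lam_i + \lam \leqs 2\lam_i$, so subtracting~$\lam$ from the above chain and using~$\veps \lam \leqs \veps \lam_i$ gives
\begin{equation*}
(1-2\veps)\lam_i \leqs \lam_i - \veps(\lam_i+\lam) \leqs \wh\lam_i \leqs \lam_i + \veps(\lam_i + \lam) \leqs (1+2\veps)\lam_i,
\end{equation*}
which is precisely~\cref{eq:eval-bound}.

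Finally, for the ratio claim I divide the upper bound on~$\wh\lam_i$ by the lower bound on~$\wh\lam_k$ (and vice versa), obtaining $\wh\lam_i/\wh\lam_k \leqs \tfrac{1+2\veps}{1-2\veps} \cdot \lam_i/\lam_k$. The elementary inequality $(1+2\veps)(1-2\veps) = 1 - 4\veps^2 \leqs 1$ then yields $(1+2\veps)/(1-2\veps) \leqs (1-2\veps)^{-2}$, so the claimed two-sided bound on~$\wh\lam_i/\wh\lam_k$ follows immediately. The only substantive step is the translation from the regularized sandwich to the unregularized eigenvalue bound, which cleanly depends on the hypothesis~$\lam \leqs \lam_k$; everything else is bookkeeping, so I do not anticipate any real obstacle.
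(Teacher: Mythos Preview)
Your proof is correct and follows exactly the approach the paper indicates: the paper's own justification is the single sentence ``using that the positive-semidefinite order preserves the order of eigenvalues,'' and you have simply spelled out that step via Courant--Fischer/Weyl monotonicity together with the bound~$\lam+\lam_i \le 2\lam_i$ for~$i\le k$. There is nothing to add.
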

As a simple application of this result, consider the task of ``noisy'' principal component analysis (PCA), i.e., performing PCA for the unknown covariance matrix~$\S$ from the observations~$X_1, ..., X_n$. 
A common way to approach it is by performing \textit{subspace iteration}~\citep{halko2011finding,mitliagkas2013memory,hardt2014noisy,balcan2016improved} with the estimated covariance~$\wh \S$: randomly choose~$U^{(0)} \in \R^{d \times k}$, and then iteratively multiply~$U^{(t)}$ by~$\wh \S$ and orthonormalize the result until convergence.
The iterate converges to the projector on the subspace of the top~$k$ eigenvalues of~$\wh\S$ (providing an estimate of the corresponding subspace for~$\S$), and its rate of convergence is known to be controlled by the ratio~$\wh\lam_{k}/\wh\lam_{k+1}$.\footnotemark~
Hence, if we use the estimate~$\wh \S$ produced by Algorithm~\ref{alg:calibrated-basic} or Algorithm~\ref{alg:calibrated-adaptive}, and if~$n$ is sufficient to guarantee that~$\veps < 1/2$,
the convergence rate to the top-$k$ eigenspace of~$\wh \S$ will essentially be the same as that for the exact method and the target subspace of~$\S$.
\footnotetext{One can use~$U^{(t)} \in \R^{d \times r}$,~$r \ge k$; the convergence rate is then controlled by the ratio of \textit{non-sequential} eigenvalues.} 

\subsection{Ridge Regression with Heavy-Tailed Observations}
\label{sec:least-squares}

In random design linear regression~\citep{hsu2012random}, one wants to fit the linear model~$Y = X^\top w$ from i.i.d.~observations~$(X_i, Y_i) \in \R^d \times \R$,~$1 \le i \le n$. 
More previsely, the goal is to find a minimizer~$w^* \in \R^d$ of the quadratic risk~$L(w) := \E (Y - X^\top w)^2$,
where the expectation is over the test pair~$(X, Y)$ independent of the sample and coming from the same distribution. 
In ordinary ridge regression, one fixes the regularization level~$\lam \ge 0$, and estimates~$w^*$ with the minimizer of the regularized empirical risk~$\wt L_{\lam}(w) :=  \frac{1}{n} \sum_{i=1}^n (Y_i^{\vphantom \top} - X_i^\top w)^2 + \lam \|w\|^2$.
The case~$\lam = 0$ corresponds to the ordinary least-squares estimator, while~$\lam > 0$ allows for some bias.

Here we propose a couterpart of this estimator with a favorable statistical guarantee under fourth-moment assumptions on the design and response. 
Given the sample~$X_1, ..., X_{2n}$, we first compute the covariance estimator~$\wh \S$ by feeding the hold-out sample~$X_{n+1}, ..., X_{2n}$ to Algorithm~\ref{alg:calibrated-basic} with~$\theta = \theta_*$ (one could also use Algorithm~\ref{alg:calibrated-adaptive}). 
Then, using the first half of the observations, we construct the ``pseudo-decorrelated'' observations~$(\wh Z_1,..., \wh Z_n)$ with~$\wh Z_i = \wh\S_{\lam}^{-1/2} X_i Y_i$, and compute the estimator
\begin{equation}
\label{eq:ridge-estimator-robust}
\bar w_{\lam} = \wh \S_\lam^{-1/2} \bar Z, \quad \text{where} \quad \bar Z = \frac{1}{n} \sum_{i=1}^n \rho_{\bar\theta}(\|\wh Z_i\|^{1/2}) \wh Z_i.
\end{equation}
Here,~$\rho_{\bar{\theta}}(\cdot)$ is as in~\cref{eq:truncation-map}--\eqref{def:minsker}, and~$\bar \theta$ is defined later.
We prove the following result (see  Appendix~\ref{sec:proof-ridge}).

\begin{theorem}
\label{th:ridge-robust}
In the above setting, assume that $X$ satisfies~$\E[X] = 0$,~$\E[X \otimes X] = \S$, and assumption~\eqref{ass:kurtosis}, and that~$Y$ has finite second and fourth moments:~$\E[Y^2] \le v^2$,~$\E[Y^{4}] \le \kresp^4 v^4$.
Assume also that~$n$ satisfies~\cref{eq:calibrated-basic-min-sample-size-simplified} from the premise of~\cref{th:calibrated-basic}. 
Then, the estimator~$\bar w_{\lam}$ given by~\eqref{eq:ridge-estimator-robust} with~
$
\bar \theta = \sqrt{{n \kappa^2 \kresp^2 v^2 \df_{\lam}(\S)}/{\log(1/\delta)}}
$
with probability at least~$1-\delta$ satisfies
\begin{equation}
\label{eq:ridge-result}
L(\bar w_{\lam}) - L(w^*) \le O(1) \left[ (\kappa^4 + \kappa^2 \kresp^2)  \frac{v^2 \df_{\lam}(\S) \log(2d/\delta)}{n} + \lam^2 \left\| \S_{\lam}^{-1/2}  w^*\right\|^2 \right].
\end{equation}
\end{theorem}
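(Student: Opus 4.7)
The plan is a bias-variance decomposition combined with a perturbation argument that lets us treat $\wh\S$ as if it were $\S$. Set $w_\lam := \S_\lam^{-1}\E[XY]$ for the population ridge solution. Standard identities give $L(w) - L(w^*) = \|\S^{1/2}(w-w^*)\|^2$ and $w^* - w_\lam = \lam\S_\lam^{-1}w^*$; combined with $\S \prccq \S_\lam$ and $\S\S_\lam^{-1}\prccq\Id$ this yields
\[
L(\bar w_\lam) - L(w^*) \leq 2\|\S_\lam^{1/2}(\bar w_\lam - w_\lam)\|^2 + 2\lam^2\|\S_\lam^{-1/2}w^*\|^2,
\]
which already accounts for the bias term in~\eqref{eq:ridge-result}. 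It remains to bound the variance $\|\S_\lam^{1/2}(\bar w_\lam - w_\lam)\|^2$. Because the two halves of the sample are independent, I will condition on the good event $\cE := \{\|\S_\lam^{-1/2}(\wh\S - \S)\S_\lam^{-1/2}\| \leq \veps\}$ of Theorem~\ref{th:calibrated-basic}, with $\veps = O(\k^2\sqrt{\df_\lam(\S)\log(d/\delta)/n})$. This event has probability $\geq 1-\delta/2$ and gives the sandwich $(1-\veps)\S_\lam \prccq \wh\S_\lam \prccq (1+\veps)\S_\lam$; in particular $\|\S_\lam^{1/2}\wh\S_\lam^{-1/2}\|^2 \leq (1-\veps)^{-1}$.

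I will next introduce the deterministic intermediate $\wt w_\lam := \wh\S_\lam^{-1}\E[XY]$ and split $\bar w_\lam - w_\lam = (\bar w_\lam - \wt w_\lam) + (\wt w_\lam - w_\lam)$. For the ``resolvent'' term, the identity $\wh\S_\lam^{-1} - \S_\lam^{-1} = \wh\S_\lam^{-1}(\S - \wh\S)\S_\lam^{-1}$ and insertion of $\S_\lam^{\pm 1/2}$ factors give
\[
\S_\lam^{1/2}(\wt w_\lam - w_\lam) = (\S_\lam^{1/2}\wh\S_\lam^{-1}\S_\lam^{1/2})\,(\S_\lam^{-1/2}(\S - \wh\S)\S_\lam^{-1/2})\,(\S_\lam^{-1/2}\E[XY]).
\]
The first two factors have operator norms $\leq (1-\veps)^{-1}$ and $\veps$ on $\cE$; the third satisfies $\|\S_\lam^{-1/2}\E[XY]\|^2 \leq (w^*)^\top \S w^* \leq \E[Y^2] \leq v^2$ via the normal equation $\S w^* = \E[XY]$ and $\S\S_\lam^{-1}\prccq\Id$. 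Hence $\|\S_\lam^{1/2}(\wt w_\lam - w_\lam)\|^2 \lsim \veps^2 v^2 \lsim \k^4 v^2 \df_\lam(\S)\log(d/\delta)/n$, which matches the $\k^4$ part of~\eqref{eq:ridge-result}.

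For the ``mean'' term, conditionally on $\wh\S$ the vectors $\wh Z_i = \wh\S_\lam^{-1/2}X_i Y_i$ are i.i.d.\ with mean $\wh\S_\lam^{-1/2}\E[XY]$, and $\bar Z$ is a Minsker-type robust mean, since $\rho_{\bar\theta}(\|\wh Z_i\|^{1/2})\,\wh Z_i = \psi_{\bar\theta}(\|\wh Z_i\|)\,\wh Z_i/\|\wh Z_i\|$ simply clips the norm at $\bar\theta$. Applying Theorem~\ref{th:minsker} to the Hermitian dilations of the $\wh Z_i$'s (symmetric $(d+1)\times(d+1)$ blocks with the same operator norm) gives, for the prescribed $\bar\theta$,
\[
\|\bar Z - \wh\S_\lam^{-1/2}\E[XY]\| \lsim \sqrt{\bar W \log(d/\delta)/n}
\]
with conditional probability $\geq 1-\delta/2$, where $\bar W \geq \E[\|\wh Z\|^2 \mid \wh\S]$. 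A directional Cauchy-Schwarz step $u^\top\E[Y^2 XX^\top]u = \E[Y^2(u^\top X)^2] \leq \k^2\kresp^2 v^2\, u^\top \S u$ gives $\E[Y^2 XX^\top] \prccq \k^2\kresp^2 v^2 \S$, so on $\cE$,
\[
\E[\|\wh Z\|^2 \mid \wh\S] = \tr(\wh\S_\lam^{-1}\E[Y^2 XX^\top]) \leq (1-\veps)^{-1}\k^2\kresp^2 v^2\df_\lam(\S).
\]
Multiplying by $\|\S_\lam^{1/2}\wh\S_\lam^{-1/2}\|^2 \leq (1-\veps)^{-1}$ yields $\|\S_\lam^{1/2}(\bar w_\lam - \wt w_\lam)\|^2 \lsim \k^2\kresp^2 v^2\df_\lam(\S)\log(d/\delta)/n$. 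Summing both variance contributions together with the bias term and taking a union bound over the two good events produces~\eqref{eq:ridge-result}.

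The main technical obstacle is the uniform-in-$\cE$ control of the matrix-variance proxy $\bar W$: it simultaneously needs the directional fourth-moment inequality $\E[Y^2 XX^\top] \prccq \k^2\kresp^2 v^2\S$ (which rests on the hypotheses on $Y$ and on $X$) and the multiplicative sandwiching of $\wh\S_\lam$ with $\S_\lam$ from Theorem~\ref{th:calibrated-basic}. A secondary subtlety is that Theorem~\ref{th:minsker} is formulated for positive semidefinite covariance estimation, whereas $\bar Z$ estimates a \emph{vector} mean; the Hermitian dilation of each $\wh Z_i$ reduces the latter back to a symmetric-matrix problem that the same concentration inequality handles.
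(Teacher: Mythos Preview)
Your proof is correct and follows essentially the same route as the paper: the same three-way split (bias, resolvent perturbation, robust mean), the same conditioning on the event from Theorem~\ref{th:calibrated-basic}, the same resolvent identity for $E_2$, the same Cauchy--Schwarz bound $\E[Y^2 X X^\top] \prccq \kappa^2\kresp^2 v^2 \S$, and the same Hermitian-dilation trick for the vector mean $\bar Z$. One small caveat: Theorem~\ref{th:minsker} as stated is for the PSD covariance estimator $\frac{1}{n}\sum \rho_\theta(\|X_i\|)X_i\otimes X_i$ and does not literally apply to $\psi_{\bar\theta}(\cH(\wh Z_i))$; the paper invokes the rectangular-matrix corollary of \cite[Cor.~3.1]{minsker2018sub} for this step (which also yields $\log(1/\delta)$ rather than $\log(d/\delta)$ in that term), so you should point to that result rather than Theorem~\ref{th:minsker} itself.
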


In the above result, the bias term is correct (leading to the minimax-optimal rates in the fixed design setting), and the stochastic term has the asymptotically optimal scaling~$O(\deff\log(1/\delta)/n)$, see, e.g., \cite{caponnetto2007optimal}. However, the obtained bound depends on the second moment~$v^2$ of the response instead of its variance. 
We believe that this problem could be resolved, leading to the fully optimal result, by replacing the truncated estimator~$\bar Z$ in~\cref{eq:ridge-estimator-robust} with median-of-means.
\section{Conclusion}
In this work, we have provided an estimator of the covariance matrix of a heavy-tailed multivariate distribution that admits a high-probability bound of the type
\[
(1-\varepsilon) \mathbf{S} \preccurlyeq \widehat{\mathbf{S}} \preccurlyeq (1+\varepsilon) \mathbf{S}.
\]
The novel estimator is computationally efficient, and has applications in principal component analysis, and in ridge regression with heavy-tailed random design. 
Let us now point out possible directions for future work.

First, one could investigate even weaker moment assumptions than~\eqref{ass:kurtosis}, for example, assuming the existence of the~$2+\eps$ moment of the one-dimensional marginals of~$X$. We envision no principal obstacles in extending our work in this direction.

Second, it would be interesting to reach full optimality in ridge regression with heavy-tailed design, replacing the second moment~$v^2$ of the response with its variance~$\sigma^2$ (see \cref{eq:ridge-result}). 
To the best of our understanding, a somewhat different expansion of the excess risk would be needed to achieve this; 
however, we believe that all tools needed to prove such a result are already in place, and it only remains to combine them in a right way. 
\section*{Acknowledgments}
The first author has been supported by the ERCIM Alain Bensoussan Fellowship and the ERC grant SEQUOIA 724063.
The second author acknowledges support from the ERC grant SEQUOIA 724063.
We thank Zaid Harchaoui, Anatoli Juditsky, and Francis Bach for fruitful discussions. Finally, we thank Nikita Zhivotovskiy and anonymous reviewers for their insightful remarks, as well as for pointing out some relevant literature.

\newpage
\appendix

\section{Degrees of Freedom Lemma}

\begin{lemma}
\label{lem:df}
For any~$\S \succq 0$ and~$\lam \ge 0$, define~$\df_{\lam}(\S) := \tr(\S \S_{\lam}^{-1})$. Then,~$\df_{\lam/2}(\S) \le 2\df_{\lam}(\S)$. 
\end{lemma}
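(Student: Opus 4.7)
The plan is to reduce the matrix inequality to a one-dimensional scalar inequality via simultaneous diagonalization, which is possible because all operators in sight are polynomials in $\S$ (and so share its eigenbasis).

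First I would diagonalize: write $\S = U \Lambda U^\top$ with $\Lambda = \mathrm{diag}(\mu_1, \dots, \mu_d)$ and $\mu_i \ge 0$. Then $\S_\lambda^{-1} = U \mathrm{diag}(1/(\mu_i + \lambda)) U^\top$, so
\[
\df_\lambda(\S) = \tr(\S \S_\lambda^{-1}) = \sum_{i=1}^d \frac{\mu_i}{\mu_i + \lambda},
\]
and analogously for $\df_{\lambda/2}(\S)$ with $\lambda/2$ in place of $\lambda$. It is therefore enough to prove the scalar inequality
\[
\frac{\mu}{\mu + \lambda/2} \le 2 \cdot \frac{\mu}{\mu + \lambda} \quad \text{for every } \mu \ge 0,
\]
since summing over $i$ yields the lemma.

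For this scalar inequality, if $\mu = 0$ both sides are $0$; otherwise divide by $\mu$ and cross-multiply (the denominators are positive), reducing the claim to $\mu + \lambda \le 2(\mu + \lambda/2) = 2\mu + \lambda$, i.e., $0 \le \mu$, which holds by assumption.

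There is no real obstacle here; the only thing to be slightly careful about is handling the boundary case $\lambda = 0$ (where $\S_\lambda^{-1}$ is interpreted on $\ran \S$, as noted in the notation paragraph of the paper, so $\mu_i/(\mu_i + \lambda)$ should be read as $1$ for $\mu_i > 0$ and as $0$ for $\mu_i = 0$), and the case $\mu_i = 0$ in the scalar step, both of which are trivial. The constant $2$ is tight, attained as $\mu \to 0^+$ with $\lambda$ fixed.
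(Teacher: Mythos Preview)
Your proof is correct. It differs from the paper's argument, which avoids diagonalization entirely: the paper bounds the \emph{difference} $|\df_{\lam/2}(\S) - \df_{\lam}(\S)|$ by rewriting it as $|\tr[\S_{\lam}^{-1/2}\S\S_{\lam}^{-1/2}(\Id - \S_{\lam}^{1/2}\S_{\lam/2}^{-1}\S_{\lam}^{1/2})]|$ and applying the trace H\"older inequality $|\tr(\A\bB)| \le \|\bB\|\tr(\A)$, together with $\|\Id - \S_{\lam}^{1/2}\S_{\lam/2}^{-1}\S_{\lam}^{1/2}\| \le 1$. Your eigenvalue argument is more elementary and gives the result directly (and shows the constant~$2$ is sharp), while the paper's operator-theoretic route never fixes a basis and mirrors the technique reused later in the proof of Lemma~\ref{lem:stability} to compare $\tr(\S\wh\S_{\lam}^{-1})$ with $\df_{\lam}(\S)$, where $\S$ and $\wh\S_{\lam}$ do \emph{not} commute and diagonalization would not suffice.
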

\begin{proof}
We have
\[
\begin{aligned}
|\df_{\lam/2}(\S) - \df_{\lam}(\S)| 
&= \left|\tr\left[\S\left(\S_{\lam}^{-1} - \S_{\lam/2}^{-1}\right)\right]\right| \\
&= \left|\tr\left[\S_{\lam}^{-1/2}\S\S_{\lam}^{-1/2}\left(\Id - \S_{\lam}^{1/2}\S_{\lam/2}^{-1}\S_{\lam}^{1/2}\right)\right]\right| \\
&\le \df_{\lam}(\S) \left\|\Id - \S_{\lam}^{1/2}\S_{\lam/2}^{-1}\S_{\lam}^{1/2}\right\| \le \df_{\lam}(\S),
\end{aligned}
\]
where we first used commutativity of the trace, and then the fact (following from the trace H\"older inequality) that
$|\tr(\A \bB)| \le \|\bB\|  \tr(\A)$ for~$\A \succq 0$ and~$\bB$ with compatible dimensions. The claim follows.
\end{proof}
\section{Proof of Lemma~\ref{lem:stability}}
\label{sec:proof-stability}

$\boldsymbol{1^o.}$
We start by deriving the consequences of~\eqref{eq:stability-premise}. First, note that
\begin{equation*}
\S_{\lam}^{1/2}\wh \S_{\lam}^{-1}\S_{\lam}^{1/2} = \S_{\lam}^{1/2}\left[\S_{\lam} - (\S -\wh\S)\right]^{-1}\S_{\lam}^{1/2} = \left[\Id - \S_{\lam}^{-1/2}(\S -\wh\S)\S_{\lam}^{-1/2}\right]^{-1}.
\end{equation*}
Whence, using~\eqref{eq:stability-premise} and the similarity rules,
\begin{align}
\label{eq:replace-psd}
\frac{2}{3} \Id \prccq \S_{\lam}^{1/2} \wh \S_{\lam}^{-1} \S_{\lam}^{1/2} \prccq 2 \Id.
\end{align}
By the properties of the spectral norm, this implies
\begin{equation}
\label{eq:replace-under-norm}
\begin{aligned}
\left\|\S_{\lam}^{1/2} \wh \S_{\lam}^{-1/2} \right\|^2 = \left\|\wh \S_{\lam}^{-1/2} \S_{\lam}^{1/2} \right\|^2 &\le 2; \\
\left\|\S_{\lam}^{-1/2} \wh \S_{\lam}^{1/2} \right\|^2 = \left\|\wh \S_{\lam}^{1/2} \S_{\lam}^{-1/2} \right\|^2 &\le \frac{3}{2}.
\end{aligned}
\end{equation}
Using that, and proceding as in the proof of Lemma~\ref{lem:df}, we can bound the degrees of freedom surrogate~$\tr(\S \wh \S^{-1}_{\lam})$ in terms of the true quantity~$\df_{\lam}(\S) = \tr(\S \S_{\lam}^{-1})$:
\begin{align*}
\tr(\S \wh \S^{-1}_{\lam}) - \df_{\lam}(\S)
&= \tr\left[\S (\S_{\lam}^{-1} - \wh \S_{\lam}^{-1}) \right]\\ 
&= \tr\left[ \S \S_{\lam}^{-1/2}\left(\Id - \S_{\lam}^{1/2}\wh \S_{\lam}^{-1}\S_{\lam}^{1/2}\right) \S_{\lam}^{-1/2} \right]\\
&= \tr\left[\S_{\lam}^{-1/2} \S \S_{\lam}^{-1/2} \left(\Id - \S_{\lam}^{1/2}\wh \S_{\lam}^{-1}\S_{\lam}^{1/2}\right) \right],
\end{align*}
where in the third line we used commutativity of the trace. Applying the trace H\"older inequality as in the proof of Lemma~\ref{lem:df}, we obtain
\begin{align}
\label{eq:df-surrogate-bound}
\left|\tr(\S \wh \S^{-1}_{\lam}) - \df_{\lam}(\S) \right| \le \left\|\Id - \S_{\lam}^{1/2}\wh \S_{\lam}^{-1}\S_{\lam}^{1/2}\right\| \df_{\lam}(\S) \le 3 \df_{\lam}(\S),
\end{align}
where we combined the triangle inequality with the right-hand side of~\eqref{eq:replace-psd}. 

$\boldsymbol{2^o.}$
We now invoke the results of~\cite{wei2017estimation} (in what follows, the expectation is conditioned on~$\wh\S_{\lam}$). 
Note that conditionally on~$\wh\S$, the random vectors~$Z_j = \wh\S_{\lam}^{-1/2}X_j$ are i.i.d.~with mean zero and covariance~$\wh\bJ = \wh\S_{\lam}^{-1/2} \S \wh\S_{\lam}^{-1/2}.$ 
By~\eqref{eq:variance-bound}, and using the linear invariance of~\eqref{ass:kurtosis}, 
\[
\left\|\E\left[\|Z_j\|^2 Z_j \otimes Z_j \right] \right\| \le \k^4 \|\wh\bJ\|^2 \reff(\wh\bJ) = \k^4 \|\wh\bJ\| \tr(\wh\bJ).
\]
Using~\eqref{eq:replace-psd} and~\eqref{eq:df-surrogate-bound} to bound~$\|\wh\bJ \|$ and~$\tr(\wh\bJ)$ correspondingly, this results in
\begin{equation}
\label{eq:stability-moment-bound}
\|\E \|Z_j\|^2 Z_j \otimes Z_j \| \le 8 \k^4 \df_{\lam}(\S).
\end{equation}
On the other hand, the estimator~$\wh\S^{(+)}$ defined in~\eqref{eq:stability-step} satisfies
\[
\wh\S_{\lam}^{-1/2}\wh\S^{(+)}\wh\S_{\lam}^{-1/2} = \frac{1}{m} \sum_{j=1}^m \rho_{\theta}(\|Z_j\|) Z_j \otimes Z_j,
\]
that is,~$\wh\S_{\lam}^{-1/2}\wh\S^{(+)}\wh\S_{\lam}^{-1/2}$ is precisely the Wei-Misnker estimator (cf.~\eqref{def:minsker}) of~$\wh \bJ$, computed from the sample~$(Z_1, ..., Z_m)$. 
Hence, combining the result of Theorem~\ref{th:minsker} with~\eqref{eq:stability-moment-bound}, we see that whenever
\[
\theta \ge 2\sqrt{2}\k^2 \sqrt{\frac{m \df_{\lam}(\S)}{\log(2d/\delta)}},
\]
with conditional probability at least~$1-\delta$ it holds
\[
\left\| \wh\S_{\lam}^{-1/2} (\wh\S^{(+)} - \S ) \wh\S_{\lam}^{-1/2} \right\| \le \frac{2\theta \log(2d/\delta)}{m}.
\]
Finally, we arrive at~\eqref{eq:stability-result} by writing
\[
\begin{aligned}
\left\|\S_{\lam/2}^{-1/2}(\wh\S^{(+)}-\S )\S_{\lam/2}^{-1/2}\right\| 
&\le \left\|\S_{\lam/2}^{-1/2} \S_{\lam}^{1/2} \right\|^2 \left\|\S_{\lam}^{-1/2} \wh\S_{\lam}^{1/2} \right\|^2  \left\| \wh\S_{\lam}^{-1/2} (\wh\S^{(+)} - \S ) \wh\S_{\lam}^{-1/2} \right\|,
\end{aligned}
\]
noting that~$\left\|\S_{\lam/2}^{-1/2} \S_{\lam}^{1/2} \right\|^2 = \left\|\S_{\lam/2}^{-1} \S_{\lam} \right\| \le 2$, and bounding~$\left\|\S_{\lam}^{-1/2} \wh\S_{\lam}^{1/2}\right\|^2 \le 3/2$ via~\eqref{eq:replace-under-norm}.
\hfill \qed
\section{Proof of Theorem~\ref{th:calibrated-adaptive} and Corollary~\ref{cor:lepski-simple}}
\label{sec:proof-lepski}

Let us call the truncation level~$\theta_j = \thetamin 2^j$, with~$j \in \cJ$,~\emph{admissible} if it satisfies the condition in~\eqref{eq:lepski-choice}, so that~$\theta_{\whj}$ is the smallest such level.
Let~$j^* \in \cJ$ be the minimal~$j \in \cJ$ such that~$\theta_{j^*} \ge \theta^*$ for~$\theta_*$ defined in~\cref{eq:calibrated-basic-threshold}; note that this is always possible by the definition~\eqref{eq:lepski-grid}, and we have
\begin{equation}
\label{eq:lepski-grid-twice}
\theta_{j^*} \le 2 \theta_*.
\end{equation}

\proofpoint{1}
Let us prove that~$\theta_{j^*}$ is admissible with probability at least~$1-\delta$. 
Indeed, due to~\eqref{eq:calibrated-lepski-min-sample-size-thetamax}, the premise~\eqref{eq:calibrated-basic-min-sample-size} of Theorem~\ref{th:calibrated-basic} holds for any~$\theta = \theta_j$ with~$j \in \cJ$ (recall that~$\theta_j \le 2\thetamax$). 
On the other hand, the premise~\eqref{eq:calibrated-basic-threshold} of Theorem~\ref{th:calibrated-basic} holds whenever~$\theta_j \ge \theta_{*}$.
Hence the bound~\eqref{eq:calibrated-basic-result} of Theorem~\ref{th:calibrated-basic} holds for all~$\theta = \theta_{j}$ with~$j \ge j^*$, and by the union bound we get that with probability at least~$1-\delta,$
\begin{equation}
\label{eq:calibrated-basic-result-for-lepski}
\left\|\S_\lam^{-1/2} ( \wh\S_j - \S ) \S_\lam^{-1/2}\right\| \le  \veps_j = \frac{24\theta_j \sqrt{\q\log(4\q d|\cJ|/\delta)\log(4d|\cJ|/\delta)}}{n}, \quad \forall j \ge j_*, \;\; j \in \cJ.
\end{equation}
Moreover, from~\eqref{eq:calibrated-lepski-min-sample-size-thetamax} we also obtain
\[
\veps_{j} \le \frac{1}{2} \sqrt{\frac{\log(4d|\cJ|/\delta)}{\q \log(4\q d|\cJ|/\delta)}} \le \frac{1}{2}, \quad j \in \cJ.
\]
Whence for any~$j \in \cJ$ such that~$j \ge j_*$ we have, under the event~\eqref{eq:calibrated-basic-result-for-lepski}, and denoting~$\wh\S_{j,\lam} = \wh\S_{j} + \lam \Id$,
\begin{equation*}
\begin{aligned}
\left\| \wh\S_{j,\la}^{-1/2} (\wh\S_{j} - \wh\S_{j_*}) \wh\S_{j,\la}^{-1/2} \right\| 
&\le \left\|\wh\S_{j,\la}^{-1/2} \S_{\la}^{1/2} \right\|^2 \cdot \left\|\S_{\la}^{-1/2} (\wh\S_{j} - \wh\S_{j_*}) \S_{\la}^{-1/2} \right\| \\
&\le \left\|\wh\S_{j,\la}^{-1/2} \S_{\la}^{1/2} \right\|^2  \left(\left\|\S_{\la}^{-1/2} (\wh\S_{j} - \S) \S_{\la}^{-1/2} \right\| + \left\|\S_{\la}^{-1/2} (\wh\S_{j_*} - \S) \S_{\la}^{-1/2} \right\| \right) \\
&\le 2(\veps_j + \veps_{j_*})
\end{aligned}
\end{equation*}
where we used~\eqref{eq:calibrated-basic-result-for-lepski} to bound the terms in the parentheses, and also used (cf.~\eqref{eq:replace-under-norm} in Appendix~\ref{sec:proof-stability}):
\[
\left\|\wh\S_{j,\la}^{-1/2} \S_{\la}^{1/2} \right\|^2 \le \frac{1}{1-\veps_{j}} \le 2, \quad \forall j \ge  j_*.
\]
Thus,~$j_*$ is indeed admissible with probability~$\ge 1-\delta$.

\proofpoint{2}
Whenever~$j_*$ is admissible, we have~$\whj \le j_*$, whence~$\veps_{\whj} \le \veps_{j_*}$ using that~$\veps_j$ increases in~$j$. Thus, with probability at least~$1-\delta$ it holds 
\begin{equation}
\label{eq:lepski-final-chain}
\begin{aligned}
&\left\|\S_\lam^{-1/2} ( \wh\S_{\whj} - \S ) \S_\lam^{-1/2}\right\| \\
&\quad \le \left\|\S_{\la}^{-1/2} \wh\S_{j_*,\la}^{1/2}\right\|^2 \cdot \left\|\wh\S_{j_*,\lam}^{-1/2} ( \wh\S_{\whj} - \S ) \wh\S_{j_*,\lam}^{-1/2}\right\| \\
&\quad\le \left\|\S_{\la}^{-1/2} \wh\S_{j_*,\la}^{1/2}\right\|^2 \cdot \left( \left\|\wh\S_{j_*,\lam}^{-1/2} ( \wh\S_{j_*} - \wh\S_{\whj} ) \wh\S_{j_*,\lam}^{-1/2}\right\| + \left\|\wh\S_{j_*,\lam}^{-1/2} ( \wh\S_{j_*} -  \S ) \wh\S_{j_*,\lam}^{-1/2}\right\|\right) \\
&\quad\le \frac{3}{2} (2(\veps_{\whj} + \veps_{j_*}) + \veps_{j_*}) \le \frac{15}{2} \veps_{j_*},
\end{aligned}
\end{equation}
where in order to obtain the last line we used (cf.~\eqref{eq:replace-under-norm} in Appendix~\ref{sec:proof-stability}) that
\[
\left\|\S_{\la}^{-1/2} \wh\S_{j_*,\la}^{1/2}\right\|^2 \le 1 + \veps_{j_*} \le {3}/{2}.
\]
Finally, combining this with the expression for~$\theta_*$ in~\eqref{eq:calibrated-basic-threshold}, and using~\eqref{eq:lepski-grid-twice}--\eqref{eq:lepski-final-chain}, we arrive at the claimed bound. 
\hfill \qed

\paragraph{Proof of Corollary~\ref{cor:lepski-simple}.}
First,~$\thetamax$ defined in the premise satisfies~\cref{eq:calibrated-lepski-min-sample-size-thetamax} by construction; moreover,~\cref{eq:calibrated-lepski-min-sample-size-thetamax} is satisfied as an equality.
On the other hand, by simple algebra~\cref{eq:lepski-simple-condition} guarantees that~$\thetamax \ge \theta_*$ for~$\theta_*$ defined in~\cref{eq:calibrated-basic-threshold}.
Finally, verifying that~$\thetamin \le \theta_*$ is trivial using that~$\kappa \ge 1$ and~$\df_{\lam}(\S) \ge 1$.
\hfill \qed
\section{Proof of Theorem~\ref{th:ridge-robust}}
\label{sec:proof-ridge}

\proofpoint{1}
First of all, note that~$n$ satisfying~\eqref{eq:calibrated-basic-min-sample-size-simplified} suffices to guarantee that
\begin{equation}
\label{eq:ridge-norm-equivalence}
\left\|\S_\lam^{-1/2} ( \wh\S - \S ) \S_\lam^{-1/2}\right\| \le 48\k^2\sqrt{\frac{\df_{\lam}(\S)\log(2d/\delta)}{n}} \le \frac{1}{2}
\end{equation}
holds with probability at least~$1-\delta/2$, cf.~\eqref{eq:calibrated-basic-result-simplified}. 
Note also that~$\wh \S_{\lam}$ is independent from~$(X_1, ..., X_n)$, hence the vectors~$\wh Z_i = \wh\S_{\lam}^{-1/2} X_i Y_i$,~$1 \le i \le n$, are independent when conditioned on~$(X_{n+1}, ..., X_{2n})$. Finally, the conditional to the hold-out sample~$X_{n+1}, ..., X_{2n}$ expectation of~$\wh Z_i$ is
\begin{equation}
\label{eq:cond-expectation}
\wh \E[\wh Z_i] = \wh \S_{\lam}^{-1/2} \S w_*,
\end{equation}
where we used that the residual~$\xi = Y - X^{\top} w_*$ satisfies~$\E[\xi X] = 0$, which follows from the fact that~$w_*$ minimizes~$L(w)$. 

\proofpoint{2}
We now decompose the excess risk of~$\bar w_{\lam}$ as follows: 
\begin{equation}
\label{eq:ridge-decomposition}
\begin{aligned}
\sqrt{L(\bar w_{\lam}) - L(w^*)}  
\le \underbrace{\| \S^{1/2}(\bar w_{\lam} - \wh w_{\lam}) \|}_{E_1} + \underbrace{\| \S^{1/2}(\wh w_{\lam} - w_{\lam}) \|}_{E_{2}} + \underbrace{\| \S^{1/2}(w_{\lam} - w^*) \|}_{E_3},
\end{aligned}
\end{equation}
where~$w_{\lam}$, given by
\begin{equation}
\label{eq:ridge-explicit}
w_{\lam} = \S_{\lam}^{-1} \S w^*,
\end{equation}
is the minimizer of~$L_{\lam}(w) = L(w) + \lam \|w\|^2$, and~$\wh w_{\lam} := \wh\E[\wh \S_{\lam}^{-1/2} \wh Z_1]$ can be calculated using~\eqref{eq:cond-expectation}:
\begin{equation}
\label{eq:ridge-conditional-estimator}
\wh w_{\lam} = \wh\S_{\lam}^{-1} \S w^*.
\end{equation}
The easiest to control in~\eqref{eq:ridge-decomposition} is the term~$E_{3}$ corresponding to the squared bias in the fixed-design setting:
\begin{equation}
\label{eq:ridge-bias-bound}
\| \S^{1/2}(w_{\lam} - w^*) \|
\le \| \S_{\lam}^{1/2}(w_{\lam} - w^*) \| = \lam \|\S_{\lam}^{-1/2} w^*\|, 
\end{equation}
resulting in the second term in the brackets in~\eqref{eq:ridge-result}.

\proofpoint{3}
On the other hand, using~\eqref{eq:ridge-explicit}--\eqref{eq:ridge-conditional-estimator} we have
\begin{equation}
\label{eq:ramdom-design-bound}
\begin{aligned}
E_2 
&\le \left\| \S_{\lam}^{1/2}(\S_{\lam}^{-1} - \wh \S_{\lam}^{-1}) \S w_* \right\| \\
&= \left\| \S_{\lam}^{1/2} \wh\S_{\lam}^{-1} (\S - \wh \S) \S_{\lam}^{-1} \S w_* \right\| \\
&\le \left\| \S_{\lam}^{1/2} \wh\S_{\lam}^{-1/2} \right\|^2 \cdot \left\| \S_{\lam}^{-1/2} (\S - \wh \S) \S_{\lam}^{-1/2} \right\| \cdot \left\|\S_{\lam}^{-1/2} \S^{1/2} \right\| \cdot \left\|\S^{1/2} w^* \right\|,
\end{aligned}
\end{equation}
where the last inequality can be verified by removing the norms. 
Under the event~\eqref{eq:ridge-norm-equivalence}, we can bound the first term by a constant (see the proof of Lemma~\ref{sec:proof-stability} in Appendix~\ref{sec:proof-stability}), and the second term by
\[
O(1)\k^2\sqrt{\frac{\df_{\lam}(\S)\log(2d/\delta)}{n}},
\] 
cf.~\eqref{eq:ridge-norm-equivalence}. The third term is at most one. Finally, we have~$\|\S^{1/2} w^*\|^2 = \E[ (X^\top w^*)^2 ]\le \E[Y^2] = v^2$. Collecting the above, under the event~\eqref{eq:ridge-norm-equivalence} we have
\[
E_2 \le O(1) \kappa^2 \sqrt{\frac{v^2\df_{\lam}(\S)\log(2d/\delta)}{n}}.
\]

\proofpoint{4}
Finally, let us estimate the term~$E_1$ which corresponds to the additive noise, and delivers the first term in the brackets in~\eqref{eq:ridge-result}. Note that we can bound
\[
\begin{aligned}
E_1 
&= \| \S^{1/2}(\bar w_{\lam} - \wh w_{\lam}) \| \\
&\le \| \S_{\lam}^{1/2}(\bar w_{\lam} - \wh w_{\lam}) \| \\
&\le \left\|\S_{\lam}^{1/2} \wh\S_{\lam}^{-1/2} \right\|  \cdot \left\| \wh\S_{\lam}^{1/2}(\bar w_{\lam} - \wh w_{\lam}) \right\| \\
&= \left\|\S_{\lam}^{1/2} \wh\S_{\lam}^{-1/2} \right\|  \cdot \left\| \bar Z - \wh\E[\wh Z]\right\|, \;\; \text{where} \;\; \wh Z = \wh \S_{\lam}^{-1/2} X Y,
\end{aligned}
\]
cf.~\eqref{eq:ridge-estimator-robust} and~\eqref{eq:cond-expectation}. Recall that under the event~\eqref{eq:ridge-norm-equivalence}, the first term in the product is bounded by a constant, and it remains to control the deviations of the estimator~$\bar Z$ of~$\wh Z$ from the (conditional) average~$\wh\E[\wh Z]$. 
To this end, consider the following construction due to~\cite[Sec.~3.3]{minsker2018sub}. 
For the general matrix~$A \in \R^{d_1 \times d_2}$, define its Hermitian dilation
\begin{equation}
\label{eq:hermitian-dilation}
\cH(A) = 
\left(
\begin{matrix} 
0_{d_1 \times d_1} & A \\
A^\top & 0_{d_2 \times d_2}
\end{matrix}\
\right),
\end{equation}
and for any~$\vphi: \R \to \R$, define the map on the space of~$(d_1 + d_2) \times (d_1 + d_2)$ Hermitian matrices:
\[
\vphi(\A) := Q \vphi(\bLambda) Q^\top = Q \, \textbf{diag}(\vphi(\lam_1) \cdots \vphi(\lam_d)) Q^\top,
\]
where~$Q \boldsymbol{\Lambda} Q^\top$ is the eigendecomposition of~$\A$. 
In this notation, consider the following estimator of~$\E[A] \in \R^{d_1 \times d_2}$ from i.i.d.~copies~$A_1, ..., A_n$ of~$A$: compute the~$(d_1 + d_2) \times (d_1 + d_2)$ Hermitian matrix
\begin{equation}
\label{eq:minsker-rectangular}
\wh \bT = \frac{1}{n} \sum_{i=1}^n \psi_{\bar\theta}(\cH(A_i)), 
\end{equation}
where~$\psi_{\bar \theta}(\cdot)$ is the matrix map corresponding to~\eqref{eq:truncation-map} with truncation level~$\bar\theta$, and then output the top right block of~$\wh \bT$ (i.e., the one corresponding to~$A$ in~$\cH(A)$) as the final estimate. 
As proved in~\cite[Cor.~3.1]{minsker2018sub}, the resulting estimate satisfies
\[
\left\| \bar A - \E[A]\right\| \le O(1)\frac{\bar\theta \log(1/\delta)}{n}
\]
with probability at least~$1-\delta$, provided that~$\delta \le 1/2$, and
\[
\bar \theta = \sqrt{\frac{n \overline{w}}{\log(1/\delta)}} \;\;  \text{for some} \;\; \overline{w} \ge \tr \E[A \otimes A].
\]

On the other hand, one can verify that this construction reduces to~$\bar Z$ when estimating~$\wh\E[\wh Z]$ from~$\wh Z_1, ..., \wh Z_n$ with the same~$\bar\theta$.
Thus, with (conditional) probability~$\ge 1-\delta/2$ it holds
\[
\left\| \bar Z - \wh\E[\wh Z]\right\| \le O(1)\frac{\bar\theta \log(2/\delta)}{n}
\]
whenever~$\bar\theta$ is taken to be
\[
\bar \theta = \sqrt{\frac{n \overline{w}}{\log(1/\delta)}} \;\;  \text{for some} \;\; \overline{w} \ge \tr \wh \E[\wh Z \otimes \wh Z],
\]
which then results in the bound
\[
\left\| \bar Z - \wh\E[\wh Z]\right\| \le O(1)\sqrt{\frac{\overline{w} \log(2/\delta)}{n}}.
\]
It remains to bound~$\bar w = \tr \wh \E[\wh Z \otimes \wh Z]$. 
Using the trace H\"older inequality, we have
\[
\begin{aligned}
\tr \left[\wh \E[\wh Z \otimes \wh Z] \right]
&= \tr\left[\wh \S_{\lam}^{-1} \E[Y^2 X X^\top] \right]  \\
&\le \left\|\S_{\lam}^{1/2} \wh\S_{\lam}^{-1/2} \right\| \cdot \tr\left[\S_{\lam}^{-1} \E[Y^2 X X^\top] \right], \\
&= \left\|\S_{\lam}^{1/2} \wh\S_{\lam}^{-1/2} \right\| \cdot \E\left[\left\|Y \S_{\lam}^{-1/2}X \right\|^2\right],
\end{aligned}
\]
where the first term on the right is at most a constant under~\eqref{eq:ridge-norm-equivalence}.
Finally, under the fourth-moment assumptions in the premise of the theorem, we can bound the last term coordinatewise, using that each coordinate of~$\S_{\lam}^{-1/2}X$ is simply the projection of~$\S_{\lam}^{-1/2}X$ onto the corresponding coordinate vector, and proceeding via Cauchy-Schwarz:
\[
\E\left[\left\|Y \S_{\lam}^{-1/2}X \right\|^2\right] \le \kresp^2 \kappa^2 v^2 \E\left[\left\|\S_{\lam}^{-1/2}X \right\|^2\right] = \kresp^2 \kappa^2 v^2 \df_{\lam}(\S).
\]
Combining the previous steps, we obtain the claimed result. 
\hfill \qed

\bibliography{biblio}
\bibliographystyle{alpha}

\end{document}